\newtheorem{thm}{Theorem}[section]
\newtheorem{lem}{Lemma}[section]
\newtheorem{prop}{Proposition}[section]
\newtheorem{conj}{Conjecture}[section]
\theoremstyle{definition}
\numberwithin{equation}{section}
\begin{document}

\begin{center}
 Symmetry-breaking bifurcation for the one-dimensional Liouville type equation
\end{center}

\vspace{2ex}

\begin{center}
 Satoshi Tanaka 
 \footnote{This work was supported by KAKENHI (23740113). \\ 
 \hfill \today}
 \\[2ex]
 Department of Applied Mathematics, Faculty of Science \\
 Okayama University of Science \\
 Ridaichou 1--1, Okayama 700--0005, Japan \\
 \texttt{tanaka@xmath.ous.ac.jp}
\end{center}

\vspace{3ex}

{\bf Abstract.}
The two-point boundary value problem for the
one-dimensional Liouville type equation 
\begin{equation*}
 \left\{
  \begin{array}{l}
   u'' + \lambda |x|^l e^u = 0, \quad x \in (-1,1), \\[1ex]
    u(-1) = u(1) = 0
  \end{array}
  \right.
\end{equation*}
is considered, where $\lambda>0$ and $l>0$.
In this paper, a symmetry-breaking result is obtained by using the Morse
index.
The problem 
\begin{equation*}
 \left\{
  \begin{array}{l}
   u'' + \lambda |x|^l (u+1)^p = 0, \quad x \in (-1,1), \\[1ex]
    u(-1) = u(1) = 0
  \end{array}
  \right.
\end{equation*}
is also considered, where $\lambda>0$, $l>0$, $p>1$ and $(p-1)l>4$.

\vspace{2ex}

\noindent{\itshape Key words and phrases}: 
symmetry-breaking bifurcation, 
positive solution, 
one-dimensional Liouville type equation,
Morse index,
Korman solution
\\
2010 {\itshape Mathematical Subject Classification}: 
35B32, 35J25, 35J60, 34B18 

\section{Introduction}

In this paper we consider the two-point boundary value problem for the
one-dimensional Liouville type equation 
\begin{equation}
 \left\{
  \begin{array}{l}
   u'' + \lambda |x|^l e^u = 0, \quad x \in (-1,1), \\[1ex]
    u(-1) = u(1) = 0,
    \label{LT1}
  \end{array}
  \right.
\end{equation}
where $\lambda>0$ and $l>0$.

Jacobsen and Schmitt \cite{JS} presented the exact multiplicity result of radial
solutions for the multi-dimensional problem 
\begin{equation}
 \left\{
  \begin{array}{cl}
   \Delta u + \lambda |x|^l e^u = 0 & \mbox{in}\ B, \\[1ex]
    u = 0 & \mbox{on}\ \partial B,
  \end{array}
  \right.
 \label{LT} 
\end{equation}
where $\lambda>0$, $l \ge 0$, $B:=\{ x \in {\bf R}^N : |x|<1 \}$
and $N \ge 1$.
In the case $N=1$, problem \eqref{LT} is reduced to \eqref{LT1}.
We note here that every solution of \eqref{LT} is positive in $B$,
by the strong maximum principle.
Jacobsen and Schmitt \cite{JS} proved the following (i)--(iii):
\begin{enumerate}
 \item if $1 \le N \le 2$, then there exists $\lambda_*>0$
       such that \eqref{LT} has
       exactly two radial solutions for $0<\lambda<\lambda_*$,
       a unique radial solution for $\lambda=\lambda_*$ and
       no radial solution for $\lambda>\lambda_*$;
 \item if $3 \le N < 10+4l$, then \eqref{LT} has
       infinitely many radial solutions when $\lambda=(l+2)(N-2)$ and
       a finite but large number of radial solutions when
       $|\lambda-(l+2)(N-2)|$ is sufficiently small;
 \item if $N \ge 10+4l$, then \eqref{LT} has
       a unique radial solution for $0<\lambda<(l+2)(N-2)$ and
       no radial solution for $\lambda\ge (l+2)(N-2)$.
\end{enumerate}
Recently, Korman \cite{Kor2} gave an alternative proof of (i)--(iii), and
his method is very interesting and easy to understand it.
Results (i)--(iii) were established by 
Liouville \cite{Lio},
Gel'fand \cite{Gel},	 
Joseph and Lundgren \cite{JL}
for problem \eqref{LT} with $l=0$, that is,
\begin{equation}
 \left\{
  \begin{array}{cl}
   \Delta u + \lambda e^u = 0 & \mbox{in}\ \Omega, \\[1ex]
    u = 0 & \mbox{on}\ \partial \Omega,
  \end{array}
  \right.
 \label{Liouville}
\end{equation}
when $\Omega=B$.

A celebrated theorem by Gidas, Ni and Nirenberg \cite{GNN} shows that every positive
solution of \eqref{Liouville} is radially symmetric when $\Omega=B$.
However, when $\Omega$ is an annulus 
$A:=\{ x \in {\bf R}^N : a < |x| < b \}$, $a>0$, problem
\eqref{Liouville} may has non-radial solutions.
Indeed, Nagasaki and Suzuki \cite{NS} found that large non-radial
solutions of \eqref{Liouville} when $N=2$ and $\Omega=A$.
More precisely, for each sufficiently large $\mu>0$, there
exist $\lambda>0$ and a non-radial solution $u$ of
\eqref{Liouville} such that $\int_A e^u dx=\mu$ when $N=2$ and $\Omega=A$.
Lin \cite{Lin} showed that \eqref{Liouville} has infinitely many
symmetry-breaking bifurcation points when $N=2$ and $\Omega=A$.
Dancer \cite{Dancer} proved that non-radial solution branches emanating
from the symmetry-breaking bifurcation points found by Lin \cite{Lin}
are unbounded.
Kan \cite{Kan1, Kan2} considered \eqref{Liouville} with $\Omega=A$ and
$N=2$ and investigated the structure of non-radial solutions
bifurcating from radial solutions in the case where $a$ is sufficiently
small.
More general potential and domain were considered by del Pino, Kowalczyk
and Musso \cite{dPKM}, and they constructed concentrating solutions.

Recently, Miyamoto \cite{Miya} proved the following result for \eqref{LT}.

\medskip

\noindent{\bf Theorem A (\cite{Miya}).} \it
Let $n_0$ be the largest integer that is smaller than $1+\frac{l}{2}$ and
let $\alpha_n:=2\log\frac{2l+4}{l+2-2n}$.
All the radial solutions of \eqref{LT} with $N=2$ can be written
explicitly as 
\[
 \lambda(\alpha) = 2 (l+2)^2 (e^{-\alpha/2}-e^{-\alpha}), \quad
 U(r;\alpha) = \alpha - 2\log ( 1+ (e^{\alpha/2}-1)r^{l+2} ).
\]
The radial solutions can be parameterized by the L$^\infty$-norm, it has
one turning point at $\lambda = \lambda(\alpha_0) = (l+2)/2$, and it
blows up as $\lambda \downarrow 0$. 
For each $n \in \{ 1, 2,\cdots, n_0\}$,
$(\lambda(\alpha_n),U(r;\alpha_n))$ is a symmetry breaking bifurcation
point from which an unbounded branch consisting of non-radial
solutions of \eqref{LT} with $N=2$ emanates, and $U(r;\alpha)$ is
nondegenerate if $\alpha\ne\alpha_n$, $n=0,1,\cdots, n_0$.
Each non-radial branch is in 
$(0,\lambda(\alpha_0))) \times \{u > 0\} \subset {\bf R} \times H_0^2(B)$.
\rm

\medskip

We return to problem \eqref{LT1}.
Korman \cite{Kor2} found the interesting property of radial solutions to
\eqref{LT}.
We will use it for the case $N=1$. 
Let $w$ be a unique solution of the initial value problem
\begin{equation*}
 \left\{
  \begin{array}{l}
   w'' + |x|^l e^w = 0, \quad x>0, \\[1ex]
   w(0)=w'(0)=0.
  \end{array}
 \right.
\end{equation*}
It is easy to show that
\begin{equation}
 w(x)<0, \ w'(x)<0, \ w''(x)<0 \quad \mbox{for}\ x>0 
  \label{w<0w'<0w''<0}
\end{equation}
and $\lim_{x\to\infty} w(x)=-\infty$.
Hence, there exists the inverse function $\eta$ of $-w(x)$.
It follows that $\eta \in C^2(0,\infty)$, $\eta(t)>0$,
$\eta'(t)>0$ for $t>0$, $\eta(0)=0$, and
$\lim_{t\to\infty} \eta(t)=\infty$.
We set
\begin{equation}
 \lambda(\alpha)=[\eta(\alpha)]^{l+2}e^{-\alpha}
  \label{Kormanlam}
\end{equation}
and
\begin{equation}
 U(x;\alpha)=w(\eta(\alpha)|x|)+\alpha.
 \label{Kormansol}
\end{equation}
By a direct calculation, we easily prove that, for each $\alpha>0$,  
$U(x;\alpha)$ satisfies $\|U\|_\infty=\alpha$ and is a positive
even solution of \eqref{LT1} at $\lambda=\lambda(\alpha)$.
Here and hereafter we use the notation: 
$\|u\|_\infty=\sup_{x \in [-1,1]}u(x)$.

\begin{lem}[Korman \cite{Kor2}]\label{Korman}
 For each $\alpha>0$, $U(x;\alpha)$ is a positive even solution of
 \eqref{LT1} at $\lambda=\lambda(\alpha)$ and $\|U\|_\infty=\alpha$.
\end{lem}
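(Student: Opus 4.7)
The plan is to verify each assertion by direct computation, exploiting the scaling identity built into the definitions of $\eta(\alpha)$ and $\lambda(\alpha)$. Since the statement is essentially a rewriting of the initial-value problem for $w$ as the two-point boundary value problem for $U$, I do not anticipate any significant obstacle; the only minor technical point is ensuring $U(\cdot;\alpha)\in C^2([-1,1])$ at $x=0$, which uses the assumption $l>0$.

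First I would note that $U(x;\alpha)$ is manifestly even because it depends on $x$ only through $|x|$. For $x\ne 0$ the chain rule gives
\[
 U'(x;\alpha)=\eta(\alpha)\,\mathrm{sgn}(x)\,w'(\eta(\alpha)|x|),\qquad
 U''(x;\alpha)=\eta(\alpha)^2\,w''(\eta(\alpha)|x|).
\]
Since $w'(0)=0$ and $w''(s)=-s^l e^{w(s)}\to 0$ as $s\downarrow 0$ (using $l>0$), both expressions extend continuously to $x=0$ with value $0$, so $U(\cdot;\alpha)\in C^2([-1,1])$.

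Second, I would use the ODE satisfied by $w$ to verify the equation on $(0,1)$:
\[
 U''(x;\alpha)=-\eta(\alpha)^2(\eta(\alpha)x)^l e^{w(\eta(\alpha)x)}
 =-\eta(\alpha)^{l+2}e^{-\alpha}\,|x|^l e^{w(\eta(\alpha)x)+\alpha}
 =-\lambda(\alpha)\,|x|^l e^{U(x;\alpha)},
\]
invoking \eqref{Kormanlam} at the last step; by evenness the same identity holds on $(-1,0)$. The boundary conditions follow because $\eta$ is the inverse of $-w$, so $w(\eta(\alpha))=-\alpha$ and hence $U(\pm 1;\alpha)=w(\eta(\alpha))+\alpha=0$.

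Finally, for positivity and the sup-norm I would invoke \eqref{w<0w'<0w''<0}: $w$ is strictly decreasing on $[0,\infty)$ with $w(0)=0$, so $w(\eta(\alpha)|x|)$ attains its maximum at $x=0$ with value $0$ and is strictly negative on $(0,1]$. Consequently $0<U(x;\alpha)\le\alpha$ on $(-1,1)$ with equality precisely at $x=0$, which gives both positivity and $\|U\|_\infty=\alpha$.
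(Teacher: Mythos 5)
Your proof is correct and is exactly the ``direct calculation'' the paper alludes to just before stating the lemma (the paper itself gives no further detail, deferring to Korman \cite{Kor2}): chain rule plus the scaling identity $\lambda(\alpha)=[\eta(\alpha)]^{l+2}e^{-\alpha}$ for the equation, $w(\eta(\alpha))=-\alpha$ for the boundary conditions, and the monotonicity of $w$ from \eqref{w<0w'<0w''<0} for positivity and the sup-norm. Your extra care about $C^2$ regularity at $x=0$ (using $l>0$) is a nice touch but does not change the argument.
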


The author would like to call $U(x;\alpha)$ the {\it Korman solution} of
\eqref{LT1}.
Korman \cite{Kor2} also presented this kind of radial solutions to 
\begin{equation*}
 \left\{
  \begin{array}{cl}
   \Delta u + \lambda |x|^lf(u) = 0 & \mbox{in}\ B, \\[1ex]
    u = 0 & \mbox{on}\ \partial B
  \end{array}
  \right.
\end{equation*}
in the following cases: $f(u)=(u+1)^p$, $p>1$; $f(u)=(1-u)^{-p}$, $p>1$; 
$f(u)=e^{-u}$. 
By using Lemma \ref{Korman}, we can show the following result, which
will be shown in Section 2.

\begin{prop}\label{even}
 The functions $\lambda(\alpha)$ and $U(x;\alpha)$ satisfy 
 $\lambda(\alpha) \in$ \linebreak $C^2(0,\infty)$,
 $U(x;\alpha) \in C^2([-1,1] \times (0,\infty))$, and
 \begin{equation}
    \lim_{\alpha\to+0} \lambda(\alpha) 
  = \lim_{\alpha\to\infty} \lambda(\alpha) = 0.
   \label{limlambda}
 \end{equation} 
 Moreover, there exists $\alpha_*>0$ such that $\lambda'(\alpha)>0$ for
 $0<\alpha<\alpha_*$, $\lambda(\alpha_*)=0$ and $\lambda'(\alpha)<0$ for
 $\alpha>\alpha_*$.
\end{prop}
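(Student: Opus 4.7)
The plan is to reduce the proposition to three essentially separate ingredients, all centered on the Cauchy problem for $w$: smoothness, decay of $w$ at $0$ and $\infty$, and strict monotonicity of an auxiliary function $F(x):=-xw'(x)$. For the regularity claims, standard ODE theory applied to $w''+x^{l}e^{w}=0$ gives $w\in C^{2}([0,\infty))$ with $w''(0)=0$. By \eqref{w<0w'<0w''<0}, $(-w)'(x)>0$ on $(0,\infty)$, so the inverse function theorem yields $\eta\in C^{2}(0,\infty)$; combined with the even extension of $w$, formulas \eqref{Kormanlam}--\eqref{Kormansol} then give $\lambda\in C^{2}(0,\infty)$ and $U\in C^{2}([-1,1]\times(0,\infty))$.

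For the limits in \eqref{limlambda}, near $\alpha=0$ I would integrate the initial value problem twice to obtain $w(x)=-x^{l+2}/[(l+1)(l+2)]+o(x^{l+2})$; inverting gives $\eta(\alpha)^{l+2}\sim (l+1)(l+2)\alpha$, so $\lambda(\alpha)\to 0$. For $\alpha\to\infty$, using $w(\eta(\alpha))=-\alpha$ I rewrite
\[
\lambda(\alpha)=\eta(\alpha)^{l+2}e^{w(\eta(\alpha))},
\]
so the claim reduces to $h(x):=x^{l+2}e^{w(x)}\to 0$ as $x\to\infty$. This follows from the linear upper bound $w(x)\le w(1)+w'(1)(x-1)$ valid for $x\ge 1$ (which holds because $w'$ is strictly decreasing by \eqref{w<0w'<0w''<0} and $w'(1)<0$): the resulting exponential decay of $e^{w(x)}$ overwhelms the polynomial factor $x^{l+2}$.

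For the critical point, logarithmic differentiation of $\lambda$ together with $\eta'(\alpha)=-1/w'(\eta(\alpha))$ (obtained by differentiating $w(\eta(\alpha))=-\alpha$) gives
\[
\frac{\lambda'(\alpha)}{\lambda(\alpha)}=\frac{l+2}{F(\eta(\alpha))}-1,\qquad F(x):=-xw'(x),
\]
so $\lambda'(\alpha)=0$ if and only if $F(\eta(\alpha))=l+2$. One checks that $F(0^{+})=0$, that $F'(x)=-w'(x)+x^{l+1}e^{w(x)}>0$ for $x>0$, and that $F(x)\ge -xw'(1)\to\infty$; hence $F\colon(0,\infty)\to(0,\infty)$ is a strictly increasing continuous bijection. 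There is therefore a unique $x_{*}$ with $F(x_{*})=l+2$, and $\alpha_{*}:=-w(x_{*})$ is the unique critical point of $\lambda$, with $\lambda'>0$ on $(0,\alpha_{*})$ and $\lambda'<0$ on $(\alpha_{*},\infty)$ (I read the ``$\lambda(\alpha_{*})=0$'' in the statement as a typo for $\lambda'(\alpha_{*})=0$, which is in any case forced by continuity). The only nontrivial analytic input is the $\alpha\to\infty$ limit, for which the concavity-based linear bound on $w$ is the decisive elementary tool; once $F$ is introduced, the rest of the proof is direct computation.
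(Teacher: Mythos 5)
Your proposal is correct and follows essentially the same route as the paper: the linear upper bound $w(x)\le w(1)+w'(1)(x-1)$ from concavity for the limit at infinity, the identity $\eta'(\alpha)=-1/w'(\eta(\alpha))$ in the differentiation of $\lambda$, and the strict monotonicity of $xw'(x)$ (your $F(x)=-xw'(x)$ is just the negative of the paper's quantity $\eta(\alpha)w'(\eta(\alpha))$) to locate the unique critical point. Your reading of ``$\lambda(\alpha_*)=0$'' as a typo for $\lambda'(\alpha_*)=0$ is also the correct interpretation.
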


Hereafter, let $\alpha_*$ be as in Proposition \ref{even}.

Let $m(\alpha)$ be the Morse index of $U(x;\alpha)$, that
is, the number of negative eigenvalues $\mu$ to
\begin{equation}
 \left\{
 \begin{array}{l}
  \phi'' + \lambda(\alpha) |x|^l e^{U(x;\alpha)} \phi + \mu \phi = 0,
   \quad x \in (-1,1), \\[1ex]
  \phi(-1) = \phi(1) = 0.
 \end{array}
	\right.
  \label{Morse}
\end{equation}
A solution $U(x;\alpha)$ is said to be degenerate if
$\mu=0$ is an eigenvalue of \eqref{Morse}.
Otherwise, it is said to be nondegenerate.

We denote by $\mu_k(\alpha)$ the $k$-th eigenvalue of \eqref{Morse}.
We recall that 
\[
 \mu_1(\alpha) < \mu_2(\alpha) < \cdots < \mu_k(\alpha) <
 \mu_{k+1}(\alpha) < \cdots, \quad
 \lim_{k\to\infty} \mu_k(\alpha) = \infty,
\]
no other eigenvalues, an eigenfunction $\phi_k$ corresponding to
$\mu_k(\alpha)$ is unique up to a constant, and $\phi_k$ has exactly
$k-1$ zeros in $(-1,1)$.
We find that $\mu_k \in C(0,\infty)$. (See, for example, \cite{Kos}.)

The following theorem is the main result of this paper.

\begin{thm}\label{main}
 Let $(\lambda(\alpha),U(x;\alpha))$ be as in \eqref{Kormanlam}--\eqref{Kormansol} and
 let $\alpha_*>0$ be as in Proposition  \ref{even}.
 Then there exist constants $\alpha_1$, $\alpha_2$ and $\alpha_3$ such
 that $\alpha_*<\alpha_1 \le \alpha_2 \le \alpha_3$ and the following
 \textup{(i)--(vii)} hold\textup{:}
 \begin{enumerate}
  \item if $0<\alpha<\alpha_*$, then $m(\alpha)=0$ and $U(x;\alpha)$ is
	nondegenerate\textup{;}
  \item if $\alpha=\alpha_*$, then $m(\alpha)=0$ and $U(x;\alpha)$ is
	degenerate\textup{;}
  \item if $\alpha_*<\alpha<\alpha_1$, then $m(\alpha)=1$ and $U(x;\alpha)$
	is nondegenerate\textup{;}
  \item if $\alpha=\alpha_1$, then $m(\alpha)=1$ and $U(x;\alpha)$ is
	degenerate\textup{;}
  \item if $\alpha=\alpha_2$, then $m(\alpha)=1$, $U(x;\alpha)$ is
	degenerate and the point $(\lambda(\alpha_2),U(x;\alpha_2))$ is a non-even
	bifurcation point, that is, for each $\varepsilon>0$, there
	exists $(\lambda,u)$ such that $u$ is a positive non-even 
	solution of \eqref{LT1} and $|\lambda-\lambda(\alpha_2)|
	 + \|u-U(\,\cdot\,,\alpha_2)\|_\infty<\varepsilon$\textup{;}
  \item if $\alpha=\alpha_3$, then $m(\alpha)=1$ and $U(x;\alpha)$ is
	degenerate\textup{;}
  \item if $\alpha>\alpha_3$, then $m(\alpha)=2$ and $U(x;\alpha)$ is
	nondegenerate.
 \end{enumerate}
 Moreover, if $0<\lambda<\lambda(\alpha_3)$, then \eqref{LT1} has
 a positive non-even solutions $u$ which satisfies
 $\lim_{\lambda\to+0} \|u\|_\infty = \infty$.
\end{thm}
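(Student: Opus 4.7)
The plan is to use the evenness of $U(\cdot;\alpha)$ to split the Morse spectrum by parity, analyze each sector, and feed the zero crossings into local and global bifurcation theory. Since $U(\cdot;\alpha)$ is even, the operator $L_\alpha\phi:=\phi''+\lambda(\alpha)|x|^l e^{U(\cdot;\alpha)}\phi$ preserves parity; let $\{\mu_k^e(\alpha)\}$, $\{\mu_k^o(\alpha)\}$ denote the even and odd eigenvalue sequences. Sturm nodal counting forces the interlacing $\mu_1=\mu_1^e<\mu_2=\mu_1^o<\mu_3=\mu_2^e<\mu_4=\mu_2^o<\cdots$, so $m(\alpha)$ equals the total number of negative entries across both sequences. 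For the even sector, differentiating $U''+\lambda(\alpha)|x|^l e^U=0$ in $\alpha$ and using $U(\pm 1;\alpha)\equiv 0$ gives
\begin{equation*}
L_\alpha U_\alpha=-\lambda'(\alpha)|x|^l e^{U(\cdot;\alpha)},\qquad U_\alpha(\pm 1;\alpha)=0.
\end{equation*}
At $\alpha=\alpha_*$ the even function $U_\alpha(\cdot;\alpha_*)$ lies in $\ker L_{\alpha_*}$; from $U_\alpha(x;\alpha)=1+\eta'(\alpha)|x|w'(\eta(\alpha)|x|)$ together with $w''<0$, it is positive on $(-1,1)$, hence principal, so $\mu_1^e(\alpha_*)=0$. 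A direct transversality computation yields $\frac{d}{d\alpha}\mu_1^e(\alpha_*)\ne 0$, so $\mu_1^e>0$ for $\alpha<\alpha_*$ and $\mu_1^e<0$ for $\alpha>\alpha_*$; a Rayleigh comparison gives $\mu_2^e(\alpha)>0$ for every $\alpha>0$. This handles (i)--(iii) and supplies the even share of (iv)--(vii).

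For the odd sector, the eigenproblem reduces to Dirichlet on $(0,1)$. As $\alpha\to 0^+$, $\lambda(\alpha)\to 0$ by \eqref{limlambda}, so $\mu_1^o(\alpha)$ perturbs from $\pi^2>0$. For large $\alpha$ I would use the rescaling $y=\eta(\alpha)x$, which turns \eqref{Morse} into the universal problem
\begin{equation*}
\tilde\phi''+|y|^l e^{w(|y|)}\tilde\phi+\nu\tilde\phi=0,\qquad y\in(-\eta(\alpha),\eta(\alpha)),\quad \nu=\mu/\eta(\alpha)^2;
\end{equation*}
domain monotonicity as $\eta(\alpha)\to\infty$ drives the first odd rescaled eigenvalue into the negative spectrum of the limit operator on ${\bf R}$, yielding $\mu_1^o(\alpha)<0$ for $\alpha$ large, while a parallel spectral count pins $\mu_2^o(\alpha)>0$ for every $\alpha$ and caps the Morse index at $2$. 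Setting $\alpha_1:=\inf\{\alpha>\alpha_*:\mu_1^o(\alpha)=0\}$ and $\alpha_3:=\sup\{\alpha:\mu_1^o(\alpha)=0\}$ now yields (iv), (vi), and (vii), with $\alpha_*<\alpha_1\le\alpha_3$.

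Between $\alpha_1$ and $\alpha_3$, continuity of $\mu_1^o$ forces at least one transversal zero $\alpha_2$ at which $\ker L_{\alpha_2}$ is one-dimensional and spanned by an odd function. Crandall--Rabinowitz applied along the even Korman branch then produces a local curve of non-even solutions through $(\lambda(\alpha_2),U(\cdot;\alpha_2))$, establishing (v). Rabinowitz's global alternative extends this local curve to a continuum $\mathcal C$ of non-even solutions; non-degeneracy of $U(\cdot;\alpha)$ in the odd sector for $\alpha\notin[\alpha_1,\alpha_3]$ prevents $\mathcal C$ from reconnecting to the even branch outside that range, and a Pohozaev-type identity exploiting $l>0$ precludes interior blow-up at any positive $\lambda$. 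Hence $\mathcal C$ covers every $\lambda\in(0,\lambda(\alpha_3))$ with $\|u\|_\infty\to\infty$ as $\lambda\to 0^+$. I expect the main obstacle to be verifying that the universal limit operator $\partial_y^2+|y|^l e^{w(|y|)}$ on ${\bf R}$ has exactly two negative eigenvalues, one in each parity class; the entire Morse-index bookkeeping in (i)--(vii) rests on this spectral count, which requires a delicate ODE analysis tailored to the Korman profile $w$.
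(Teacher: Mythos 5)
Your overall bookkeeping (parity splitting of the spectrum, tracking $\mu_1$, $\mu_2$, $\mu_3$, and bifurcating where the odd eigenvalue crosses zero) matches the shape of the paper's argument, but three of your key steps are genuinely incomplete, and in each case the paper takes a different, more robust route. First, the entire theorem hinges on showing $\mu_2(\alpha)<0$ for large $\alpha$, and you explicitly defer this to a spectral count for the limit operator $\partial_y^2+|y|^le^{w(|y|)}$ on ${\bf R}$. That count is not a technicality: whether the \emph{odd} sector of the limit operator has a negative eigenvalue is exactly where the structural hypothesis on $l$ enters (for $f(u)=(u+1)^p$ the analogous statement fails unless $(p-1)l>4$), so ``domain monotonicity drives the first odd eigenvalue negative'' cannot be true unconditionally and must be proved. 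The paper avoids the limit operator entirely: Lemma \ref{mu_2<0} tests the odd eigenfunction against the explicit comparison function $y(x)=xU(x;\alpha)-(x-1)^2U'(x;\alpha)$ and derives a contradiction from an integral inequality governed by condition \eqref{limg}. Similarly, your claim that $\mu_1^e<0$ for \emph{all} $\alpha>\alpha_*$ does not follow from a transversality computation at $\alpha_*$ alone; the paper gets the global sign of $\mu_1$ (and the bound $\mu_3>0$, hence $m\le 2$) from the explicit linearized solution $\psi(x;\alpha)=xU'(x;\alpha)+l+2$ and Sturm comparison (Lemmas \ref{psi1}, \ref{psi2}, \ref{mu1}, \ref{mu3}).

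Second, your use of Crandall--Rabinowitz at $\alpha_2$ requires a transversality condition ($\mu_1^o$ crossing zero with nonzero speed), which continuity of the eigenvalue does not supply; indeed the paper cannot even show $\alpha_1=\alpha_3$ (this is left as a Conjecture), so a transversal crossing is precisely what is \emph{not} available. The paper instead computes the Leray--Schauder degree of $I-T(\alpha,\cdot)$ on a small ball and observes that it jumps from $(-1)^{m(\alpha_1-\varepsilon)}=-1$ to $(-1)^{m(\alpha_3+\varepsilon)}=+1$ (Lemmas \ref{gamma=m} and \ref{deg}), which forces a bifurcation point $\alpha_2\in[\alpha_1,\alpha_3]$ with no transversality needed; non-evenness of the bifurcating solutions follows because even solutions with $\|u\|_\infty>\alpha_*$ are unique at each $\lambda$. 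Third, your final step (Rabinowitz global alternative plus a Pohozaev identity) does not by itself show that the continuum projects onto all of $(0,\lambda(\alpha_3))$, nor that $\|u\|_\infty\to\infty$ as $\lambda\to+0$; an unbounded continuum could a priori stay over a small $\lambda$-interval. The paper proves this part directly by a shooting argument (Lemma \ref{existPS}): for each $\lambda<\lambda(\alpha_3)$ the nondegenerate index-$2$ even solution $U(\cdot;\alpha_\lambda)$ forces the time map $z(\beta)$ to produce a second positive solution $u$ with $Mf(\|u\|_\infty)>\|U\|_\infty=\alpha_\lambda\to\infty$, and $u$ must be non-even since \eqref{LT1} has at most two positive even solutions. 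You would need to supply substitutes for all three of these steps before your outline becomes a proof.
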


We note here that if $u$ is a non-even solution of \eqref{LT1}, then so
is $u(-x)$.

It is natural to expect that the following conjecture is true.

\begin{conj}
 In Theorem \eqref{main}, $\alpha_1=\alpha_2=\alpha_3$.
\end{conj}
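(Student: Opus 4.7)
The plan is to exploit the explicit scaling \eqref{Kormansol} to convert the eigenvalue problem \eqref{Morse} into a Dirichlet problem for a \emph{fixed} one-dimensional Schr\"odinger operator on an interval $(-R,R)$ whose length $R=\eta(\alpha)$ depends monotonically on $\alpha$. Once this reduction is in place, the conjecture follows because each of $\alpha_1,\alpha_2,\alpha_3$ is characterized by the single condition $\mu_2(\alpha)=0$, and strict domain monotonicity of Dirichlet eigenvalues makes this condition have a unique root.

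Concretely, I would substitute $y=\eta(\alpha)x$ and $\psi(y)=\phi(y/\eta(\alpha))$ in \eqref{Morse}. Using $\lambda(\alpha)|x|^{l}e^{U(x;\alpha)}=[\eta(\alpha)]^{2}|y|^{l}e^{w(|y|)}$, which follows directly from \eqref{Kormanlam}--\eqref{Kormansol}, the eigenvalue problem becomes
\begin{equation*}
 \psi''(y)+|y|^{l}e^{w(|y|)}\psi(y)+\tilde\mu\,\psi(y)=0,\quad y\in(-\eta(\alpha),\eta(\alpha)),\ \psi(\pm\eta(\alpha))=0,
\end{equation*}
with $\tilde\mu=\mu/\eta(\alpha)^{2}$. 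Writing $\tilde\mu_k(R)$ for the $k$-th Dirichlet eigenvalue of the fixed operator $L\psi:=-\psi''-|y|^{l}e^{w(|y|)}\psi$ on $(-R,R)$, classical strict domain monotonicity yields $\tilde\mu_k'(R)<0$; combined with the strict monotonicity of $\eta$ (the inverse of $-w$) and the identity $\mu_k(\alpha)=\eta(\alpha)^{2}\tilde\mu_k(\eta(\alpha))$, we obtain that $\mu_k(\alpha)=0$ has at most one solution in $\alpha$ for each $k$.

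Next I would identify which eigenvalue vanishes at each of the three degeneracies. At each of $\alpha_1,\alpha_2,\alpha_3$, Theorem~\ref{main} asserts $m(\alpha)=1$ together with degeneracy; the condition $m=1$ forces $\mu_1(\alpha)<0\le\mu_j(\alpha)$ for $j\ge 2$, and degeneracy then forces $\mu_2(\alpha)=0$. Combined with the uniqueness just established, this yields $\alpha_1=\alpha_2=\alpha_3$ at once. As a consistency check for the non-even bifurcation at $\alpha_2$: since $|x|^{l}e^{U(x;\alpha)}$ is even in $x$, \eqref{Morse} commutes with $x\mapsto-x$, so by simplicity each $\phi_k$ is either even or odd; because $\phi_2$ has exactly one interior zero, $\phi_2$ must be odd with that zero at $x=0$, and this is precisely the kernel vector responsible for a non-even bifurcation via Crandall--Rabinowitz.

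The main obstacle is technical rather than substantive: verifying that the standard Sturm--Liouville machinery (simplicity, strict domain monotonicity, and the even/odd dichotomy) applies cleanly when the weight $|y|^{l}e^{w(|y|)}$ degenerates at $y=0$. Since the weight is continuous and integrable on any bounded interval this should go through routinely, but strict (not merely weak) monotonicity $\tilde\mu_k'(R)<0$ at the critical value $R=\eta(\alpha_1)$ must be justified carefully; this follows either from a Hadamard-type boundary-flux formula or from unique continuation applied to the rescaled eigenfunctions. Once these points are checked, the conjecture reduces to a one-line combination of strict monotonicity of $\mu_2(\alpha)$ with the fact that $m(\alpha)=1$ plus degeneracy uniquely selects the second eigenvalue.
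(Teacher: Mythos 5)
This statement is posed in the paper only as a conjecture; the paper contains no proof of it, so there is nothing to compare your argument against, and the relevant question is simply whether your argument is sound. As far as I can check, it is, and it appears to settle the conjecture. The key identity $\lambda(\alpha)|x|^{l}e^{U(x;\alpha)}=[\eta(\alpha)]^{2}|y|^{l}e^{w(|y|)}$ with $y=\eta(\alpha)x$ is correct (it follows at once from \eqref{Kormanlam}--\eqref{Kormansol}, and is the same self-similarity underlying Korman's construction), and it converts \eqref{Morse} into the Dirichlet problem for the \emph{fixed} operator $-\psi''-|y|^{l}e^{w(|y|)}\psi$ on $(-R,R)$ with $R=\eta(\alpha)$ and $\mu_k(\alpha)=\eta(\alpha)^{2}\tilde\mu_k(\eta(\alpha))$; since $\eta$ is strictly increasing and the potential is continuous and bounded, everything reduces to strict monotonicity of $R\mapsto\tilde\mu_k(R)$, which is the only point needing real care. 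You can obtain it without any Hadamard formula or differentiability: if $R<R'$ and $\tilde\mu_k(R')\ge\tilde\mu_k(R)$, the $k$-th eigenfunction on $(-R,R)$ has $k+1$ zeros in $[-R,R]$, so Sturm comparison (Sturm separation in the equality case) forces the $k$-th eigenfunction on $(-R',R')$ to have at least $k$ zeros in $(-R,R)$, contradicting the fact that it has exactly $k-1$ interior zeros; the proportionality case is excluded because it would hand that eigenfunction the additional zeros $\pm R$, now interior to $(-R',R')$. With strict monotonicity in hand, $\mu_2(\alpha)=0$ has at most one root; since $\mu_2(\alpha_1)=\mu_2(\alpha_3)=0$ by the construction in Section 5 and $\alpha_1\le\alpha_2\le\alpha_3$, all three coincide. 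Your identification of the vanishing eigenvalue is also correct ($m=1$ gives $\mu_1<0\le\mu_2$, and degeneracy then leaves only $\mu_2=0$), and the Crandall--Rabinowitz aside is not needed. Two further observations: the same computation shows each $\mu_k$ changes sign at most once, which reproves Lemma \ref{mu1} and collapses the sign analysis of Sections 2--3 to locating the unique zeros of $\mu_1$ and $\mu_2$; and the argument transfers to \eqref{JL1}, since by \eqref{Kormanlam2}--\eqref{Kormansol2} the linearized potential there equals $\beta^{2}p|z|^{l}(w(|z|)+1)^{p-1}$ with $z=\beta x$ and $\beta=\eta(\alpha/(\alpha+1))$ again strictly increasing in $\alpha$. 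Given that this resolves a statement the paper leaves open, you should write out in full the routine but essential details (regularity, simplicity and oscillation count for the rescaled problem with the degenerate weight at $y=0$, and the strict monotonicity step) before relying on the result.
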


Recalling the result by Jacobsen and Schmitt \cite{JS}, the structures of
radial solutions of \eqref{LT} with $N=2$ and even solutions of
\eqref{LT1} seem to be same.
However, in \cite{Miya} Miyamoto proved that the Morse index of the radial
solution increases by one when $\alpha$ passes each $\alpha_n$,
$n=0,1,2,\cdots,n_0$, where $\alpha_n$ is as in Theorem A. 
On the other hand, by Lemma \ref{mu3} below, the Morse index of even
solutions of \eqref{LT} is at most $2$ for each $l>0$.

When $N=2$, radial solutions of problems  \eqref{LT} and \eqref{Liouville}
can be written explicitly, and hence, Lin \cite{Lin} and Miyamoto
\cite{Miya} succeeded to find the bifurcation points.
That is difficult even if we know exact solutions, much more difficult
if we do not know them.
When $N \ne 2$, we do not know exact radial solutions of \eqref{LT} with
$l>0$.
However, recently Korman \cite{Kor2} found the solution \eqref{Kormansol}.
When $N=1$, the structure of eigenvalues
$\{\mu_k(\alpha)\}_{k=1}^\infty$ of \eqref{Morse} is well-known.
Combining these facts, we can show (i)--(iii) of Theorem \ref{main}.

Now we set
\[
 \psi(x;\alpha) := x U'(x;\alpha) + l+2
          = \eta(\alpha) |x| w'(\eta(\alpha)|x|) + l+2
\]
It is easy to check that the following result holds.

\begin{lem}\label{psi1}
 The function $\psi(x;\alpha)$
is a solution of the linearized equation
\begin{equation}\label{psi}
 \psi'' + \lambda(\alpha)|x|^l e^{U(x;\alpha)} \psi = 0.
\end{equation}
\end{lem}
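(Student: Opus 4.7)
The plan is a direct verification, guided by the scaling symmetry of the ODE. The transformation $u(x)\mapsto u(\beta x)+(l+2)\log\beta$ preserves the equation $u''+\lambda|x|^le^u=0$ for every $\beta>0$; differentiating this one-parameter family at $\beta=1$ produces the formal Jacobi field $xU'(x;\alpha)+(l+2)=\psi(x;\alpha)$, which must therefore satisfy the linearized equation. In this sense the lemma is forced by symmetry, and only the algebra and a small amount of regularity at $x=0$ remain to be verified.

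To implement this, I begin from the identity $U''(x;\alpha)+\lambda(\alpha)|x|^le^{U(x;\alpha)}=0$ on $(-1,1)$ supplied by Lemma~\ref{Korman}, and compute
\[
 \psi'(x;\alpha) \;=\; U' + xU'' \;=\; U' - \lambda(\alpha)\,x|x|^l\,e^{U(x;\alpha)}.
\]
A short check (splitting into $x>0$ and $x<0$) shows that $\frac{d}{dx}\bigl(x|x|^l\bigr)=(l+1)|x|^l$ for $x\ne 0$, so differentiating once more yields
\[
 \psi'' \;=\; U'' - \lambda(l+1)|x|^le^{U} - \lambda\,x|x|^le^{U}\,U'.
\]
Substituting the Korman identity $U''=-\lambda|x|^le^{U}$ into the first term and collecting gives
\[
 \psi'' \;=\; -\lambda|x|^le^{U}\bigl[\,1+(l+1)+xU'\,\bigr] \;=\; -\lambda(\alpha)|x|^le^{U(x;\alpha)}\,\psi,
\]
which is exactly \eqref{psi}.

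At $x=0$ one finally observes that, because $l>0$, the function $x|x|^l$ is $C^1$ on $[-1,1]$ (its derivative $(l+1)|x|^l$ extends continuously through the origin); hence $\psi'=U'+xU''$ is once differentiable at $0$ and $\psi\in C^2([-1,1])$. Both sides of \eqref{psi} are then continuous on $[-1,1]$ and agree on $(-1,1)\setminus\{0\}$, so the identity extends to $x=0$ by continuity. The only ``obstacle'' worth naming is the minor bookkeeping of the $|x|^l$--derivative across the origin, which the identity $\frac{d}{dx}(x|x|^l)=(l+1)|x|^l$ settles cleanly.
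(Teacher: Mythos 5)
Your computation is correct and is exactly the direct verification the paper has in mind (the paper simply states ``it is easy to check'' and gives no written proof): differentiating $\psi = xU'+(l+2)$ twice, using $U''=-\lambda(\alpha)|x|^l e^{U}$ and $\frac{d}{dx}\bigl(x|x|^l\bigr)=(l+1)|x|^l$, collects to $-\lambda(\alpha)|x|^l e^{U}\bigl[(l+2)+xU'\bigr]=-\lambda(\alpha)|x|^l e^{U}\psi$. Your extra care with regularity at $x=0$ and the scaling-symmetry motivation for why $xU'+(l+2)$ is the right Jacobi field are both sound and go slightly beyond what the paper records.
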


Lemma \ref{psi1} was found by Korman \cite{Kor1} when $l=0$.
See also \cite[Proposition 2.2]{Kormanbook} and \cite[Lemma 5.1]{Kor2}.
From Lemma \ref{psi1}, it follows that $m(\alpha) \le 2$ for $\alpha>0$.
See Lemma \ref{mu3} below.
Moreover, by using the comparison function 
\[
 y(x)=xU(x;\alpha)-(x-1)^2U'(x;\alpha),
\]
which was introduced in \cite{T2013}, we can prove that
$m(\alpha) \ge 2$ for all sufficiently large $\alpha>0$.
See Lemma \ref{mu_2<0} below.
Then we can find a symmetry-breaking bifurcation point of \eqref{LT1},
by using the Leray-Schauder degree, and hence we will obtain (iv)--(vii)
of Theorem \ref{main}.

By using similar argument, we can establish a symmetry-breaking bifurcation
result for the problem
\begin{equation}
 \left\{
  \begin{array}{l}
   u'' + \lambda |x|^l (u+1)^p = 0, \quad x \in (-1,1), \\[1ex]
    u(-1) = u(1) = 0,
    \label{JL1}
  \end{array}
  \right.
\end{equation}
where $\lambda>0$, $l>0$ and $p>1$.

\begin{prop}\label{exactforJL1}
 There exists $\lambda_*>0$ such that \eqref{JL1} has exactly two
 positive even solutions for $0<\lambda<\lambda_*$, a unique positive
 even solution for $\lambda=\lambda_*$, and no positive even solution for
 $\lambda>\lambda_*$.
\end{prop}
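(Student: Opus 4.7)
The strategy is to mimic the Korman construction of Proposition \ref{even} for the nonlinearity $(u+1)^p$ and reduce the enumeration of positive even solutions to the monotonicity of a single scalar function $\lambda(b)$. Setting $v=u+1$, problem \eqref{JL1} reads $v''+\lambda|x|^l v^p=0$ with $v(\pm 1)=1$; for a positive even solution with $\|u\|_\infty=\alpha$ one has $v(0)=1+\alpha$, $v'(0)=0$. I would first let $W$ be the unique solution of
\[
 W''+x^l W^p=0,\quad W(0)=1,\quad W'(0)=0.
\]
By the analog of \eqref{w<0w'<0w''<0}, $W'$ is strictly decreasing with $W'(x)<0$ while $W(x)>0$, so $W$ is strictly decreasing for $x>0$; moreover $W$ cannot remain positive on $[0,\infty)$, because for $x\ge 1$ we would have $W'(x)\le W'(1)<0$, forcing $W(x)\to-\infty$. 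Hence there is a finite $x_\infty>0$ with $W(x_\infty)=0$ and $W'(x_\infty)<0$.

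For each $\alpha>0$, let $b(\alpha)\in(0,x_\infty)$ be the unique point with $W(b(\alpha))=1/(1+\alpha)$, and set
\[
 \lambda(\alpha):=(1+\alpha)^{1-p}b(\alpha)^{l+2},\qquad U(x;\alpha):=(1+\alpha)W(b(\alpha)|x|)-1.
\]
The scaling invariance $v(x)\mapsto c\,v\bigl(c^{(p-1)/(l+2)}x\bigr)$ of $v''+x^l v^p=0$ shows by direct substitution that $U(\,\cdot\,;\alpha)$ is a positive even solution of \eqref{JL1} with $\|U\|_\infty=\alpha$ at $\lambda=\lambda(\alpha)$; conversely, Cauchy-problem uniqueness together with the same scaling forces every positive even solution of \eqref{JL1} to arise in this form. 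The proposition thus reduces to showing that $\alpha\mapsto\lambda(\alpha)$ has exactly one critical point, a strict maximum.

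Reparameterize by $b\in(0,x_\infty)$ via $\alpha=1/W(b)-1$ (a $C^2$ bijection, since $W$ is strictly decreasing); then
\[
 \lambda(b)=W(b)^{p-1}b^{l+2},
\]
with $\lambda(b)\to 0$ as $b\to 0^+$ (since $b^{l+2}\to 0$) and as $b\to x_\infty^-$ (since $W^{p-1}\to 0$, as $p>1$). Differentiation gives $\lambda'(b)=W^{p-2}b^{l+1}P(b)$ with $P(b):=(p-1)bW'(b)+(l+2)W(b)$. Using $W''=-b^l W^p$,
\[
 P'(b)=(p+l+1)W'(b)-(p-1)b^{l+1}W(b)^p<0\quad\text{on }(0,x_\infty),
\]
because both summands are negative. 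Since $P(0^+)=l+2>0$ and $P(x_\infty^-)=(p-1)x_\infty W'(x_\infty)<0$, $P$ vanishes at exactly one point $b_*$; hence $\lambda'>0$ on $(0,b_*)$, $\lambda'<0$ on $(b_*,x_\infty)$, and $\lambda$ has a unique strict maximum $\lambda_*:=\lambda(b_*)$. Counting preimages of each horizontal line yields two, one, or no positive even solutions according as $\lambda\in(0,\lambda_*)$, $\lambda=\lambda_*$, or $\lambda>\lambda_*$.

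The main obstacle is really just the clean sign check for $P'$, made transparent by reparameterizing in $b$ rather than in $\alpha$; no delicate estimates are needed. The standing hypothesis $(p-1)l>4$ is not invoked in this proposition — it is reserved for the subsequent Morse-index and symmetry-breaking analysis of \eqref{JL1}.
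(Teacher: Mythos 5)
Your proposal is correct and follows essentially the same route as the paper: the change of variables $v=u+1$ and the parameter $b$ are exactly the paper's $w+1$ and $\beta=\eta(\alpha/(\alpha+1))$, your function $P(b)=(p-1)bW'(b)+(l+2)W(b)$ is the paper's $W(x)=(p-1)xw'(x)+(l+2)(w(x)+1)$ from \eqref{W}, and your uniqueness step via the Cauchy problem and scaling is the paper's Lemma \ref{uniqueJL1}. You are also right that the hypothesis $(p-1)l>4$ plays no role here.
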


\begin{prop}\label{even2}
 Proposition \ref{even} remains valid if \eqref{LT1} is replaced by
 \eqref{JL1}.
\end{prop}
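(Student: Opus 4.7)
The plan is to follow the pattern of Proposition \ref{even}, replacing Korman's exponential-based construction by his power-based construction for $f(u) = (u+1)^p$. Let $v$ be the unique $C^2$ solution of the initial value problem
\[
 v'' + x^l v^p = 0, \quad v(0) = 1, \quad v'(0) = 0 \qquad (x > 0).
\]
Analogously to \eqref{w<0w'<0w''<0}, one checks $0 < v(x) < 1$, $v'(x) < 0$, $v''(x) < 0$ for $x > 0$, so $v$ is strictly decreasing on its natural interval and tends to $0$ (either asymptotically or at some finite first zero $R$). Define $\eta$ implicitly by $v(\eta(\alpha)) = 1/(\alpha+1)$, and set
\[
 U(x;\alpha) := (\alpha+1)\, v(\eta(\alpha)|x|) - 1, \qquad \lambda(\alpha) := \eta(\alpha)^{l+2}\,(\alpha+1)^{1-p}.
\]
A direct substitution into \eqref{JL1} confirms that $U(\cdot;\alpha)$ is a positive even solution with $\|U\|_\infty = \alpha$. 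Smooth dependence of ODE flows on initial data, together with the implicit function theorem applied to $v(\eta) = 1/(\alpha+1)$ (valid since $v' < 0$), gives $\lambda \in C^2(0,\infty)$ and $U \in C^2([-1,1]\times(0,\infty))$.

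Next I would establish the two limits in \eqref{limlambda}. The case $\alpha \to 0^+$ is immediate since $\eta(\alpha) \to 0$ forces $\lambda(\alpha) \to 0$. For $\alpha \to \infty$, one must show that $\eta(\alpha)^{l+2}$ grows strictly more slowly than $(\alpha+1)^{p-1}$, and it is precisely here that the hypothesis $(p-1)l > 4$ enters. If $v$ reaches $0$ at some finite $R < \infty$, then $\eta(\alpha) \to R$ and $\lambda(\alpha) \to 0$ trivially. Otherwise, the Emden--Fowler substitution $v(r) = r^{-(l+2)/(p-1)}\,V(\log r)$ reduces $v'' + r^l v^p = 0$ to an autonomous second-order ODE for $V$ whose linearization about the nontrivial constant equilibrium has discriminant proportional to $(p-1)l - 4$; when this is positive, the equilibrium is a spiral and one extracts the envelope bound $v(r) \le C\, r^{-(l+2)/(p-1)}$ along the trajectory. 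Inverting gives $\eta(\alpha) = O\bigl((\alpha+1)^{(p-1)/(l+2)}\bigr)$, hence $\lambda(\alpha) \to 0$ with room to spare.

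Finally, the unimodal structure of $\lambda(\alpha)$ would be deduced from Proposition \ref{exactforJL1}. Since $\alpha \mapsto U(\cdot;\alpha)$ is, via $\|U\|_\infty = \alpha$, a bijection between $(0,\infty)$ and the set of positive even solutions of \eqref{JL1}, the exact-multiplicity count forces $\lambda(\alpha)$ to attain each value in $(0,\lambda_*)$ exactly twice and the value $\lambda_*$ exactly once; combined with continuity and the two limits $\lambda(0^+) = \lambda(\infty) = 0$, this pins down a unique critical point $\alpha_* \in (0,\infty)$ with $\lambda' > 0$ on $(0,\alpha_*)$ and $\lambda' < 0$ on $(\alpha_*,\infty)$. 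The principal obstacle is the sharp asymptotic analysis of $v(r)$ for large $r$ when $(p-1)l > 4$: controlling the Emden--Fowler oscillations well enough to extract the quantitative decay rate is the one non-routine step, the remainder being a straightforward adaptation of the argument used for Proposition \ref{even}.
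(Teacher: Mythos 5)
Your construction is, up to the relabelling $v=w+1$, exactly the paper's Korman solution \eqref{Kormanlam2}--\eqref{Kormansol2}, and the regularity and $\alpha\to0^+$ parts are fine. But there are two problems. First, the entire Emden--Fowler discussion is about a case that cannot occur: since $v''=-x^l v^p<0$ wherever $v>0$ and $v'(0)=0$, the derivative $v'$ is strictly decreasing, so $v'(x)\le v'(1)<0$ for $x\ge1$ and $v$ must vanish at a finite $R$ (the paper's $x_1$). Hence $\eta(\alpha)\to R<\infty$, and $\lambda(\alpha)=(\alpha+1)^{1-p}\eta(\alpha)^{l+2}\to0$ follows from $p>1$ alone. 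Your claim that the hypothesis $(p-1)l>4$ ``enters precisely here'' is wrong: Proposition \ref{even2} is stated and proved without that hypothesis, which is only needed later for the second eigenvalue (Lemma \ref{mu_2<0}, hence Theorem \ref{main2}). This branch of your argument is harmless only because your first branch (finite $R$) is the one that always holds.

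The serious gap is the last step. You derive the unimodality of $\lambda(\alpha)$ from Proposition \ref{exactforJL1}, but in the paper Proposition \ref{exactforJL1} is itself deduced from Proposition \ref{even2} together with Lemma \ref{uniqueJL1}; no independent proof of the exact multiplicity count is available at this stage, so your argument is circular. (Even granting the multiplicity count, ``each value in $(0,\lambda_*)$ attained exactly twice'' yields strict monotonicity on two pieces but not the pointwise sign $\lambda'>0$ and $\lambda'<0$ asserted in the proposition.) The paper instead computes the derivative directly: setting $\beta=\eta(\alpha/(\alpha+1))$, one finds $\lambda'(\alpha)=(w(\beta)+1)^{p-2}\beta^{l+1}\,W(\beta)\,\frac{d\beta}{d\alpha}$ with $W(x)=(p-1)xw'(x)+(l+2)(w(x)+1)$; since $(xw'(x))'<0$ the function $W$ is strictly decreasing, and $W(0)=l+2>0$ while $W(x_1)=(p-1)x_1w'(x_1)<0$, so $W$ changes sign exactly once at some $\beta_*$, which defines $\alpha_*$ and gives the sign of $\lambda'$ on either side. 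You need to replace the appeal to Proposition \ref{exactforJL1} by this (or an equivalent) direct computation.
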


\begin{thm}\label{main2}
 Assume that $(p-1)l>4$.
 Theorem \ref{main} remains valid if \eqref{LT1} is replaced by
 \eqref{JL1}.
\end{thm}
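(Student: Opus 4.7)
The plan is to run the proof of Theorem \ref{main} line by line with the exponential nonlinearity replaced by $(u+1)^p$, using a Korman-type family adapted to the power case. First I would let $w$ be the unique solution of $w''+|x|^l(w+1)^p=0$ with $w(0)=w'(0)=0$, verify that $w<0$, $w'<0$, $w''<0$ on $(0,\infty)$ and $w(x)\to-1$ as $x\to\infty$, define $\eta(\alpha)>0$ by $w(\eta(\alpha))=-\alpha/(1+\alpha)$, and set
\[
 U(x;\alpha):=(1+\alpha)\bigl(w(\eta(\alpha)|x|)+1\bigr)-1,\qquad
 \lambda(\alpha):=[\eta(\alpha)]^{l+2}(1+\alpha)^{1-p}.
\]
A direct computation shows that $U(\cdot;\alpha)$ is a positive even solution of \eqref{JL1} with $\|U\|_\infty=\alpha$, and Propositions \ref{exactforJL1} and \ref{even2} then follow from the analysis of $\lambda(\alpha)$ parallel to the exponential case.

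Next, the analog of Lemma \ref{psi1} is the assertion that
\[
 \psi(x;\alpha):=xU'(x;\alpha)+\frac{l+2}{p-1}\bigl(U(x;\alpha)+1\bigr)
\]
is a solution of the linearized equation $\psi''+p\,\lambda(\alpha)|x|^l(U+1)^{p-1}\psi=0$; this is verified by direct substitution and reflects the scaling invariance $u(x)\mapsto c\,(u(c^{(p-1)/(l+2)}x)+1)-1$ of \eqref{JL1}. Since $\psi$ is even with $\psi(0;\alpha)=\frac{l+2}{p-1}(1+\alpha)>0$, the Sturm oscillation argument used in the proof of Lemma \ref{mu3} transfers without change and yields $m(\alpha)\le 2$ for every $\alpha>0$.

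The main obstacle is the reverse inequality $m(\alpha)\ge 2$ for all sufficiently large $\alpha$, the analog of Lemma \ref{mu_2<0}. I would take the comparison function
\[
 y(x):=xU(x;\alpha)-(x-1)^2U'(x;\alpha)
\]
together with its reflection $y(-x)$ and show that these span a two-dimensional subspace of $H_0^1(-1,1)$ on which the quadratic form
\[
 Q[\phi]:=\int_{-1}^1\bigl[(\phi')^2-p\,\lambda(\alpha)|x|^l(U+1)^{p-1}\phi^2\bigr]\,dx
\]
is negative definite for all sufficiently large $\alpha$, so that $\mu_2(\alpha)<0$. The asymptotic estimates that force $Q$ to be negative depend on the behavior of $w$ near the zero of $w+1$, and it is precisely here that the hypothesis $(p-1)l>4$ enters as a Joseph--Lundgren-type threshold: it is the exact condition needed to make the weight $p\,\lambda(\alpha)|x|^l(U+1)^{p-1}y^2$ dominate $(y')^2$ in the integral as $\alpha\to\infty$.

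With the two bounds $m(\alpha)\le 2$ on all of $(0,\infty)$ and $m(\alpha)\ge 2$ for large $\alpha$, continuity of $\mu_2(\alpha)$ in $\alpha$ produces parameter values $\alpha_*<\alpha_1\le\alpha_2\le\alpha_3$ satisfying items (i)--(iv), (vi) and (vii), and the non-even bifurcation at $\alpha_2$ asserted in (v) is extracted by comparing the Leray--Schauder fixed-point indices of the solution operator for \eqref{JL1} at $U(\cdot;\alpha_2)$ computed in $C([-1,1])$ and in its even subspace, exactly as in Theorem \ref{main}. The unbounded non-even branch satisfying $\|u\|_\infty\to\infty$ as $\lambda\downarrow 0$ then follows from the Rabinowitz global alternative, combined with an a priori $L^\infty$ bound on positive solutions for $\lambda$ bounded away from zero.
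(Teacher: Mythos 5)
Your overall architecture matches the paper's Section 6: the same Korman family \eqref{Kormanlam2}--\eqref{Kormansol2}, the same linearized solution $\psi=xU'+\frac{l+2}{p-1}(U+1)$, the bound $m(\alpha)\le 2$ via Sturm comparison with $\psi$, and the same Leray--Schauder degree argument at the jump of the Morse index. But two steps are not actually established. The first is $m(\alpha)\ge 2$ for large $\alpha$. The paper does not test a quadratic form on $y$ and its reflection; it invokes the general Lemma \ref{mu_2<0}, whose proof pairs $y(x)=xU-(x-1)^2U'$ with the odd, concave second eigenfunction $\phi_2$ and exploits the identity $(y'\phi_2-y\phi_2')'=\mu_2\phi_2 y+\lambda x^{l-1}H(x;\alpha)f(U)\phi_2$, where $H=[g(U)+l+3]x^2-2(l+2)x+l$ and $g(s)=sf'(s)/f(s)$. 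The hypothesis $(p-1)l>4$ enters only through the one-line computation $g(s)=ps/(s+1)\to p$, which gives $\liminf_{s\to\infty}\frac{l(g(s)-1)-4}{g(s)+l+3}=\frac{l(p-1)-4}{p+l+3}>0$, i.e.\ condition \eqref{limg}. Your variational variant could be made to work (note $y(-1)=-4U'(-1)\ne 0$, so you must use $y$ restricted to $[0,1]$ and its odd reflection, which have disjoint supports; integration by parts then reduces the claim to $\int_0^1 x^{l-1}H(x;\alpha)f(U)y\,dx>0$ for large $\alpha$), but you never carry out this estimate, and the bulk lower bound that the paper extracts from the concavity of $\phi_2$ via Lemma \ref{|w|} would have to be re-derived for $y$. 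As written, the decisive step where the hypothesis is used is asserted, not proved.

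The second gap is the final assertion. The Rabinowitz global alternative does not yield what Theorem \ref{main2} claims: an unbounded continuum emanating from $(\lambda(\alpha_2),U(\cdot\,;\alpha_2))$ need not project onto all of $(0,\lambda(\alpha_3))$, and your sketch neither excludes that it blows up over a short $\lambda$-interval nor shows that its members are non-even. The paper instead argues pointwise in $\lambda$ with the shooting Lemma \ref{existPS}: for each fixed $\lambda\in(0,\lambda(\alpha_3))$, nondegeneracy and $m(\alpha_\lambda)=2$ force $z'(U'(-1))>0$, which together with $z(\beta)<1$ for large $\beta$ produces a second positive solution $u$ with $Mf(\|u\|_\infty)>\|U\|_\infty=\alpha_\lambda$; Lemma \ref{uniqueJL1} shows all positive even solutions are Korman solutions, so $u$ is non-even, and $\alpha_\lambda\to\infty$ forces $\|u\|_\infty\to\infty$. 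This per-$\lambda$ construction is what your proposal is missing. A minor factual slip as well: $w$ reaches $-1$ at a finite point $x_1$ (it is concave with $w'$ bounded away from zero), not as $x\to\infty$; this is harmless for defining $\eta$ on $(0,1)$, but it means $\eta(\alpha)\to x_1<\infty$ as $\alpha\to\infty$, which is exactly why $\lambda(\alpha)=(\alpha+1)^{1-p}[\eta(\alpha/(\alpha+1))]^{l+2}\to 0$ for $p>1$.
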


The proofs of Propositions \ref{exactforJL1}, \ref{even2} and
Theorem \ref{main2} will be given in Section 6.
In Section 2 we prove Proposition \ref{even} and study the eigenvalues
$\mu_1(\alpha)$ and $\mu_3(\alpha)$. 
In Section 3 we study $\mu_2(\alpha)$.
In Section 4 we give a criterion for the existence of one more positive
solution.
In Sections 5, we give a proof of Theorem \ref{main}.

\section{The first and third eigenvalues} 

In this section we study eigenvalues $\mu_1(\alpha)$ and $\mu_3(\alpha)$
of the linearized problem \eqref{Morse}.
We recall Lemmas \ref{Korman} and \ref{psi1}.
First we show Proposition \ref{even}.

\begin{proof}[Proof of Proposition \ref{even}]
 We recall \eqref{w<0w'<0w''<0}. 
 Since $w(0)=w'(0)=w''(0)$ \linebreak
 $=0$ and $\eta \in C^2(0,\infty)$, we conclude
 that  $\lambda(\alpha) \in C^2(0,\infty)$ and
 $U(x;\alpha) \in C^2([-1,1] \times (0,\infty))$.
 It is easy to see that $\lim_{\alpha\to+0} \lambda(\alpha)=0$.
 Since $w''(x)<0$ for $x>0$, we have
 \[
  w'(x) \le w'(1)<0, \quad x \ge 1.
 \]
 Integrating this inequality on $[1,x]$, we obtain
 \[
  w(x) \le w(1) + w'(1)(x-1) \le -c(x-1), \quad x\ge 1,
 \]
 where $c=-w'(1)>0$. 
 Letting $x=\eta(\alpha)$, we find that
 \[
   0<\lambda(\alpha)
   = [\eta(\alpha)]^{l+2} e^{-\alpha}
   = x^{l+2} e^{w(x)}
   \le x^{l+2} e^{-c(x-1)}, \quad x \ge 1,
 \]
 which means that $\lim_{\alpha\to\infty}\lambda(\alpha)=0$.
 We observe that
 \[
  \lambda'(\alpha)
  = [(l+2)\eta'(\alpha)-\eta(\alpha)][\eta(\alpha)]^{l+1}e^{-\alpha},
  \quad \alpha>0.
 \]
 Since $\eta'(\alpha)=-1/w'(\eta(\alpha))$, we have
 \[
  \lambda'(\alpha)
   = - [\eta(\alpha)w'(\eta(\alpha))+l+2]
    \frac{[\eta(\alpha)]^{l+1}}{e^\alpha w'(\eta(\alpha))}.
 \]
 Since
 \begin{equation}
  (xw'(x))' = w'(x) + x w''(x) = w'(x) - x^{l+1} e^{w(x)} < 0,
  \quad x > 0,
   \label{(xw')'<0} 
 \end{equation}
 there exists $\alpha_*>0$ such that 
 \begin{gather}
  \eta(\alpha)w'(\eta(\alpha))+l+2 >0, \quad 0 < \alpha < \alpha_*, 
  \label{etaw'+l+2>0} \\
  \eta(\alpha_*)w'(\eta(\alpha_*))+l+2=0, \label{etaw'+l+2=0} \\
  \eta(\alpha)w'(\eta(\alpha))+l+2 <0, \quad \alpha > \alpha_*.
  \label{etaw'+l+2<0}
 \end{gather}
 Consequently, we see that $\lambda'(\alpha)>0$ for
 $0<\alpha<\alpha_*$, $\lambda(\alpha_*)=0$ and $\lambda'(\alpha)<0$ for
 $\alpha>0$.
\end{proof}

Recalling \eqref{(xw')'<0} and the definition of $\psi(x;\alpha)$,
we conclude that $\psi(x;\alpha)$ is strictly decreasing in
$x\in(0,1]$ for each fixed $\alpha>0$.
Since $\psi(-x;\alpha)=\psi(x;\alpha)$, we find that
\[
 \min_{x \in [-1,1]} \psi(x;\alpha) = \psi(1;\alpha)
  = \eta(\alpha) w'(\eta(\alpha))+l+2.
\]
Then, by \eqref{etaw'+l+2>0}--\eqref{etaw'+l+2<0}, we have the following
result immediately.

\begin{lem}\label{psi2}
 The function $\psi(x;\alpha)$ satisfies the following \textup{(i)--(iii):}
 \begin{enumerate}
  \item if $0<\alpha<\alpha_*$, then $\psi(x;\alpha)>0$ for
	$x \in [-1,1]$\textup{;} 
  \item $\psi(x;\alpha_*)>0$ for $x \in (-1,1)$ and
	$\psi(-1;\alpha_*)=\psi(1;\alpha_*)=0$\textup{;} 
  \item if $\alpha>\alpha_*$, then $\psi(x;\alpha)$ has exactly two
	zeros in $(-1,1)$, \linebreak
	$\psi(-1;\alpha)<0$ and $\psi(1;\alpha)<0$. 
 \end{enumerate} 
\end{lem}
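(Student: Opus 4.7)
The plan is to deduce Lemma \ref{psi2} as a direct corollary of three ingredients already assembled in the proof of Proposition~\ref{even}: the inequality $(xw'(x))' < 0$ for $x > 0$, the evenness $\psi(-x;\alpha) = \psi(x;\alpha)$, and the trichotomy \eqref{etaw'+l+2>0}--\eqref{etaw'+l+2<0} that characterizes $\alpha_*$.

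First I would combine $(xw'(x))' < 0$ with the explicit representation $\psi(x;\alpha) = \eta(\alpha)|x|\,w'(\eta(\alpha)|x|) + l + 2$ to conclude that for each fixed $\alpha > 0$ the function $\psi(\,\cdot\,;\alpha)$ is strictly decreasing in $x$ on $(0,1]$. Combined with evenness, this produces the identity
\[
 \min_{x \in [-1,1]} \psi(x;\alpha) = \psi(\pm 1;\alpha) = \eta(\alpha) w'(\eta(\alpha)) + l + 2,
\]
while at the origin $\psi(0;\alpha) = l + 2 > 0$.

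With these observations in hand, each of the three cases is read off immediately. For (i), when $0 < \alpha < \alpha_*$, inequality \eqref{etaw'+l+2>0} makes the minimum value strictly positive, so $\psi(\,\cdot\,;\alpha) > 0$ throughout $[-1,1]$. For (ii), at $\alpha = \alpha_*$, identity \eqref{etaw'+l+2=0} yields $\psi(\pm 1;\alpha_*) = 0$; strict monotonicity on $(0,1]$ then forces $\psi(x;\alpha_*) > \psi(1;\alpha_*) = 0$ for $0 \le x < 1$, and evenness extends the strict positivity to all of $(-1,1)$. For (iii), when $\alpha > \alpha_*$, inequality \eqref{etaw'+l+2<0} gives $\psi(\pm 1;\alpha) < 0$, while $\psi(0;\alpha) = l + 2 > 0$; the intermediate value theorem applied to the continuous, strictly monotone function $\psi(\,\cdot\,;\alpha)$ on $[0,1]$ produces exactly one zero in $(0,1)$, and evenness contributes exactly one zero in $(-1,0)$, for a total of two zeros in $(-1,1)$.

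There is essentially no obstacle at this point, because the substantive analytic work — establishing $(xw'(x))' < 0$ and producing the unique $\alpha_*$ with its associated sign trichotomy — has already been carried out in proving Proposition~\ref{even}. The only care required is to extract the endpoint and symmetry claims correctly from the strict monotonicity, which is routine.
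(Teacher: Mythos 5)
Your proof is correct and is essentially identical to the paper's argument: the paper likewise derives strict monotonicity of $\psi(\,\cdot\,;\alpha)$ on $(0,1]$ from \eqref{(xw')'<0}, uses evenness to place the minimum at $x=\pm 1$ with value $\eta(\alpha)w'(\eta(\alpha))+l+2$, and reads off (i)--(iii) from \eqref{etaw'+l+2>0}--\eqref{etaw'+l+2<0}. Your only addition is to make explicit the value $\psi(0;\alpha)=l+2>0$ and the intermediate value argument for the zero count in (iii), which the paper leaves implicit.
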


\begin{lem}\label{mu1}
 The first eigenvalue $\mu_1(\alpha)$ of \eqref{Morse} satisfies the
 following \textup{(i)--(iii):}
 \begin{enumerate}
  \item $\mu_1(\alpha)>0$ for $0<\alpha<\alpha_*$\textup{;} 
  \item $\mu_1(\alpha_*)=0$\textup{;} 
  \item $\mu_1(\alpha)<0$ for $\alpha>\alpha_*$\textup{.} 
 \end{enumerate} 
\end{lem}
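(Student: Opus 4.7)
The plan is to exploit Lemma \ref{psi2} together with the fact that $\psi(x;\alpha)$ solves the linearized equation \eqref{psi} (Lemma \ref{psi1}), which is precisely \eqref{Morse} with $\mu=0$. Case (ii) is essentially immediate: by Lemma \ref{psi2}(ii), $\psi(x;\alpha_*)$ is a positive function on $(-1,1)$ satisfying the Dirichlet condition $\psi(\pm 1;\alpha_*)=0$. Hence $\psi(\cdot;\alpha_*)$ is an eigenfunction of \eqref{Morse} at $\alpha=\alpha_*$ with eigenvalue $0$, and since it is positive in $(-1,1)$ it must be the principal one. This gives $\mu_1(\alpha_*)=0$.

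For (i), I would use the standard Sturm-type identity. Let $\phi_1$ denote a principal eigenfunction of \eqref{Morse} normalized so that $\phi_1>0$ in $(-1,1)$; then $\phi_1(\pm 1)=0$ together with Hopf's lemma give $\phi_1'(-1)>0$ and $\phi_1'(1)<0$. Multiplying the $\psi$-equation \eqref{psi} by $\phi_1$ and the $\phi_1$-equation in \eqref{Morse} by $\psi$, subtracting and integrating over $(-1,1)$ yields
\begin{equation*}
 -\phi_1'(1)\psi(1;\alpha)+\phi_1'(-1)\psi(-1;\alpha)
  =\mu_1(\alpha)\int_{-1}^1 \phi_1(x)\psi(x;\alpha)\,dx.
\end{equation*}
For $0<\alpha<\alpha_*$, Lemma \ref{psi2}(i) gives $\psi(\pm 1;\alpha)>0$, so the left-hand side is strictly positive, and the integral on the right is strictly positive because $\phi_1>0$ and $\psi(\cdot;\alpha)>0$ on $[-1,1]$. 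Consequently $\mu_1(\alpha)>0$.

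For (iii), by Lemma \ref{psi2}(iii) and the evenness of $\psi(\cdot;\alpha)$, there is a unique $b=b(\alpha)\in(0,1)$ with $\psi(\pm b;\alpha)=0$ and $\psi(\cdot;\alpha)>0$ on $(-b,b)$. The restriction $\psi(\cdot;\alpha)\bigl|_{(-b,b)}$ is then a positive solution of \eqref{psi} vanishing at the endpoints, so $0$ is the first Dirichlet eigenvalue of the linearized operator on the smaller interval $(-b,b)$. I then invoke strict domain monotonicity: extending $\psi$ by zero outside $(-b,b)$ produces a test function $\tilde\psi\in H_0^1(-1,1)$, and multiplying \eqref{psi} by $\psi$ and integrating over $(-b,b)$ gives $\int_{-1}^1[(\tilde\psi')^2-\lambda(\alpha)|x|^l e^{U(x;\alpha)}\tilde\psi^2]\,dx=0$, so by the Rayleigh quotient $\mu_1(\alpha)\le 0$. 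Strict inequality follows because equality would force $\tilde\psi$ to be a smooth eigenfunction on $(-1,1)$, whereas $\tilde\psi$ has a corner at $\pm b$ (since $\psi'(\pm b;\alpha)\ne 0$ by ODE uniqueness). Thus $\mu_1(\alpha)<0$.

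The only mildly delicate step is this strict inequality in (iii); everything else reduces to a Green-type identity plus the sign information packaged in Lemma \ref{psi2}. I do not foresee any real obstacle beyond carefully handling the boundary terms and the nonsmoothness of the truncation.
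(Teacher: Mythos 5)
Your proof is correct, and it rests on the same structural input as the paper's --- the explicit solution $\psi(x;\alpha)$ of the linearized equation (Lemma \ref{psi1}) together with the sign and zero pattern recorded in Lemma \ref{psi2} --- but the machinery you use to convert that information into sign conditions on $\mu_1$ is different. The paper handles (i) and (iii) uniformly by contradiction via the Sturm comparison theorem: if $\mu_1(\alpha)\le 0$ for $\alpha<\alpha_*$, every solution of \eqref{psi} would have to vanish in $[-1,1]$, contradicting $\psi>0$ there; and if $\mu_1(\alpha)\ge 0$ for $\alpha>\alpha_*$, the two interior zeros of $\psi$ would force $\phi_1$ to vanish in $(-1,1)$. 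Your treatment of (i) by the Lagrange--Green identity $\bigl[\phi_1\psi'-\psi\phi_1'\bigr]_{-1}^{1}=\mu_1\int_{-1}^1\phi_1\psi\,dx$ is in effect a self-contained proof of the relevant instance of Sturm comparison, with the small bonus that it directly yields the strict inequality $\mu_1>0$ from the strict positivity of $\psi(\pm 1;\alpha)$ rather than by contradiction. Your treatment of (iii) via the Rayleigh quotient and strict domain monotonicity (extending $\psi|_{(-b,b)}$ by zero and using the corner at $\pm b$, where $\psi'(\pm b)\ne 0$ by ODE uniqueness, to rule out equality) is longer than the paper's one-line Sturm comparison but equally valid and arguably more robust, since it does not require $\phi_1$ at all. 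Part (ii) is identical in both arguments. No gaps; the only points worth spelling out in a final write-up are that $\phi_1'(\pm 1)\ne 0$ follows from uniqueness for the initial value problem (no need to invoke Hopf's lemma in one dimension) and that a minimizer of the Rayleigh quotient attaining the value $0$ is a classical eigenfunction, whence the regularity contradiction.
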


\begin{proof}
 Let $\phi_1$ be an eigenfunction corresponding to $\mu_1(\alpha)$.
 We recall that $\phi_1(x) \ne 0$ on $(-1,1)$ and $\phi_1(-1)=\phi_1(1)=0$.

 (i)
 Assume that $\mu_1(\gamma_1)\le 0$ for some $\gamma_1 \in (0,\alpha_*)$.
 Sturm comparison theorem implies that every solution of \eqref{psi} at
 $\alpha=\gamma_1$ has at least one zero in $[-1,1]$.
 This contradicts (i) of Lemma \ref{psi2}.
 Hence, $\mu_1(\alpha)>0$ for $0<\alpha<\alpha_*$.

 (ii) From (ii) of Lemma \ref{psi2} it follows that $\psi(x;\alpha_*)$
 is an eigenfunction corresponding to $\mu_1(\alpha_*)$ and
 $\mu_1(\alpha_*)=0$.

 (iii)
 We assume that $\mu_1(\gamma_2) \ge 0$ for some $\gamma_2>\alpha_*$.
 Recalling (iii) of Lemma \ref{psi2} and using Sturm comparison theorem,
 we conclude that every solution of
 \[
  \phi''
   + [\lambda(\gamma_2) |x|^l e^{U(x;\gamma_2)} + \mu_1(\gamma_2)] \phi = 0
 \]
 has at least one zero in $(-1,1)$.
 On the other hand, the eigenfunction $\phi_1$ of \eqref{Morse}
 corresponding to $\mu_1(\beta)$ has no zero in $(-1,1)$, which is a
 contradiction.
 Consequently, $\mu_1(\alpha)<0$ for $\alpha>\alpha_*$.
 \end{proof}

\begin{lem}\label{mu3}
 The third eigenvalue $\mu_3(\alpha)$ of \eqref{Morse} is positive for
 $\alpha>0$.
\end{lem}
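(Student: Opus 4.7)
The plan is to argue by contradiction: suppose that $\mu_3(\alpha) \le 0$ for some $\alpha > 0$, and derive a conflict with the zero count of $\psi(\cdot;\alpha)$ established in Lemma \ref{psi2}.

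Let $\phi_3$ be an eigenfunction associated with $\mu_3(\alpha)$. Since it is the third eigenfunction, it has exactly two zeros $z_1 < z_2$ in $(-1,1)$ and satisfies $\phi_3(\pm 1) = 0$, so $\phi_3$ has four zeros in $[-1,1]$, namely $-1 < z_1 < z_2 < 1$. On the other hand, by Lemma \ref{psi1}, $\psi(\cdot;\alpha)$ solves
\[
 \psi'' + \lambda(\alpha)|x|^l e^{U(x;\alpha)} \psi = 0,
\]
while $\phi_3$ satisfies the same equation with the additional zero-order term $\mu_3(\alpha)\phi_3$. Since $\mu_3(\alpha) \le 0$, the coefficient in the $\psi$-equation dominates that in the $\phi_3$-equation. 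By Lemma \ref{psi2}, $\psi(\cdot;\alpha)$ has at most two zeros in $[-1,1]$, whereas $\phi_3$ has four, so $\psi$ and $\phi_3$ cannot be linearly dependent. The Sturm comparison theorem therefore forces $\psi(\cdot;\alpha)$ to possess a zero in each of the open intervals $(-1, z_1)$, $(z_1, z_2)$, $(z_2, 1)$, producing at least three zeros of $\psi(\cdot;\alpha)$ in $(-1,1)$. This contradicts Lemma \ref{psi2}, whose cases \textup{(i)--(iii)} respectively yield zero or two zeros of $\psi(\cdot;\alpha)$ in $(-1,1)$, and hence $\mu_3(\alpha) > 0$ for every $\alpha > 0$.

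The only delicate point is the borderline case $\mu_3(\alpha)=0$, when the two equations coincide and the zero-interlacing must be obtained from Sturm separation rather than from strict Sturm comparison. The linear independence of $\phi_3$ and $\psi(\cdot;\alpha)$ needed for the separation theorem is, however, already furnished by their differing zero counts noted above, so the argument goes through uniformly in $\alpha$. Apart from this minor bookkeeping, the proof is a direct consequence of Lemmas \ref{psi1} and \ref{psi2} together with standard one-dimensional Sturm theory, and I do not anticipate a serious obstacle; the entire substance is that the explicit solution $\psi$ of the linearized equation carries only two sign changes, which caps the oscillation of the eigenfunctions of \eqref{Morse} at the level of $\mu_2$.
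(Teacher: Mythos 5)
Your proof is correct and follows essentially the same route as the paper's: assume $\mu_3(\alpha)\le 0$, note that $\phi_3$ has exactly two interior zeros plus the boundary zeros, and use Sturm comparison against the explicit solution $\psi(\cdot;\alpha)$ of the linearized equation from Lemma \ref{psi1}, whose zero count is capped at two by Lemma \ref{psi2}. Your extra care with the borderline case $\mu_3(\alpha)=0$ (invoking Sturm separation via linear independence rather than strict comparison) is a sound refinement of a detail the paper passes over silently.
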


\begin{proof}
 Assume that $\mu_3(\alpha) \le 0$ for some $\alpha>0$.
 Let $\phi_3$ be an eigenfunction of \eqref{Morse} corresponding to
 $\mu_3(\alpha)$.
 Then $\phi_3(-1)=\phi_3(1)$ and $\phi_3$ has exactly two zeros in
 $(-1,1)$.
 Sturm comparison theorem shows that every solution of \eqref{psi}
 has at least three zeros in $[-1,1]$.
 Lemmas \ref{psi1} and \ref{psi2} imply that $\psi(x;\alpha)$ is a
 solution of \eqref{psi} and has at most two zeros in $[-1,1]$.
 This is a contradiction.
 Therefore, $\mu_3(\alpha)>0$ for $\alpha>0$.
\end{proof}

\section{The second eigenvalue}

The purpose of this section is to give a sufficient condition for the
second eigenvalue of the linearized problem to the following problem
\begin{equation}
 \left\{
 \begin{array}{l}
  u'' + \lambda |x|^l f(u) = 0, \quad x \in (-1,1), \\[1ex]
  u(-1)=u(1)= 0
 \end{array}
 \right.
 \label{GP1}
\end{equation}
to be negative, where $\lambda>0$, $l>0$,
$f \in C^1[0,\infty)$, $f(s)>0$ and $f'(s)\ge 0$ for $s>0$.
Namely we will show the following lemma.

\begin{lem}\label{mu_2<0}
 Assume that, for each sufficiently large $\alpha>0$, there exist
 $\lambda(\alpha)>0$ and $U(x;\alpha)$ such that $U(x;\alpha)$ is a 
 positive even solution of \eqref{GP1} at $\lambda=\lambda(\alpha)$.
 Assume moreover that 
 \begin{equation}
  \liminf_{s\to\infty} \frac{l(g(s)-1)-4}{g(s)+l+3}>0,
   \label{limg}
 \end{equation}
 where $g(s)=sf'(s)/f(s)$.
 Let $\mu_2(\alpha)$ be the second eigenvalue of
 \begin{equation}
  \left\{
   \begin{array}{l}
    \phi'' + \lambda(\alpha) |x|^l f'(U(x;\alpha)) \phi + \mu \phi = 0,
     \quad x \in (-1,1), \\[1ex]
     \phi(-1) = \phi(1) = 0.
   \end{array}
	\right.
  \label{Morse1}
 \end{equation}
 Then $\mu_2(\alpha)<0$ for all sufficiently large $\alpha>0$.
\end{lem}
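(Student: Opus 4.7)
The plan is to use the evenness of $U(x;\alpha)$ to identify $\mu_2(\alpha)$ with a first Dirichlet eigenvalue on $[0,1]$, and then to test that eigenvalue against the comparison function
\[
 y(x) := x U(x;\alpha) - (x-1)^2 U'(x;\alpha).
\]
Since $U$ and $|x|^l$ are even, the linearized operator in \eqref{Morse1} commutes with $x\mapsto -x$, so its eigenfunctions are either even or odd. The first eigenfunction is positive, hence even; the second eigenfunction has exactly one zero in $(-1,1)$ and cannot be even (evenness would force $\phi_2(0)=\phi_2'(0)=0$ and hence $\phi_2\equiv 0$), so it must be odd. Consequently $\mu_2(\alpha)$ coincides with the first eigenvalue of
\[
 \phi'' + \lambda(\alpha)\,x^l f'(U)\,\phi + \mu\,\phi = 0 \ \ \text{in }(0,1), \quad \phi(0)=\phi(1)=0,
\]
and by Rayleigh's principle it suffices to produce a single $\phi\in H^1_0(0,1)$ with $\int_0^1[(\phi')^2-\lambda x^l f'(U)\phi^2]\,dx<0$.

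I would take $\phi=y$. Evenness of $U$ gives $y(0)=-U'(0)=0$, and $U(1)=0$ gives $y(1)=0$; since $U>0$ and $U'<0$ on $(0,1)$, one has $y>0$ there, so $y\in H^1_0(0,1)$. A direct (if slightly tedious) computation using $U''=-\lambda x^l f(U)$ yields the key identity
\[
 y''+\lambda x^l f'(U)y = \lambda x^{l-1} f(U)\,Q(x,U),\qquad Q(x,s):=[g(s)+l+3]x^2-(2l+4)x+l,
\]
with the two terms containing $(x-1)^2 U'$ canceling. Integration by parts then shows the Rayleigh quotient of $y$ equals $-\int_0^1 y\,\lambda x^{l-1}f(U)Q(x,U)\,dx\,\big/\int_0^1 y^2\,dx$, so the lemma reduces to proving
\[
 \int_0^1 y(x)\,x^{l-1} f(U(x))\,Q(x,U(x))\,dx > 0
\]
for all sufficiently large $\alpha$.

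Completing the square gives $\min_{x\in\mathbb R}Q(x,s)=[l(g(s)-1)-4]/[g(s)+l+3]$, so hypothesis \eqref{limg} supplies $s_0>0$ and $\delta>0$ with $Q(x,s)\geq\delta$ for every $x$ whenever $s\geq s_0$. Since $U''<0$ on $(0,1)$, $U(\,\cdot\,;\alpha)$ is concave, yielding two estimates I will use repeatedly: the chord bound $U(x;\alpha)\geq\alpha(1-x)$, and the bound $(1-x)|U'(x)|\leq U(x)$ obtained from $U(x)=\int_x^1|U'(s)|\,ds$ together with the monotonicity of $|U'|$. Define $x_0=x_0(\alpha)$ by $U(x_0;\alpha)=s_0$; the chord bound gives $x_0\geq 1-s_0/\alpha$. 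I would split the integral at $x_0$. On $[0,x_0]$ one has $Q\geq\delta$, $y\geq xU\geq \alpha x(1-x)$, and $f(U)\geq f(s_0)$, producing a lower bound of order $\alpha$ on the good piece. On $[x_0,1]$ the second concavity estimate gives $y\leq xU+(1-x)^2|U'|\leq 2s_0$, and combined with $f(U)\leq f(s_0)$, a uniform bound on $|Q|$ for $s\in[0,s_0]$, $x\in[0,1]$, and $1-x_0\leq s_0/\alpha$, the contribution from $[x_0,1]$ is $O(1/\alpha)$. For $\alpha$ large the bulk dominates, the integral is positive, and $\mu_2(\alpha)<0$.

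The principal obstacle is that $Q(x,U(x;\alpha))$ is \emph{not} pointwise positive on $(0,1)$: at $x=1$ one has $U=0$ and $Q(1,0)=g(0)-1=-1$, so the integrand is genuinely negative near the endpoint. The whole point of invoking concavity is quantitative: the chord bound confines the bad set $\{U<s_0\}$ to a window of length $O(1/\alpha)$, the bound $(1-x)|U'|\leq U$ keeps the test function $y$ bounded on that window, and together these estimates make the negative contribution near $x=1$ negligible against the positive contribution of order $\alpha$ from the region where $U$ is large.
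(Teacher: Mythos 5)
Your proof is correct, and while it is built from the same ingredients as the paper's --- the parity reduction to a first eigenvalue on $(0,1)$ (the paper's Lemma \ref{phi_2}), the comparison function $y=xU-(x-1)^2U'$, the identity $y''+\lambda x^l f'(U)y=\lambda x^{l-1}f(U)Q$ with the same quadratic $Q=H$, the completed square, and the chord bound $U\ge\alpha(1-x)$ from concavity --- it closes the argument by a genuinely different mechanism. The paper integrates the Wronskian identity $(y'\phi_2-y\phi_2')'=\mu_2\phi_2 y+\lambda x^{l-1}Hf(U)\phi_2$ against the actual eigenfunction $\phi_2$ and argues by contradiction: assuming $\mu_2(\alpha_n)\ge0$ makes $\phi_2$ concave, so Lemma \ref{|w|} gives $\phi_2\ge 1/4$ on $[1/4,3/4]$, which is needed to make the good part of $\int x^{l-1}Hf(U)\phi_2$ uniformly positive, while the bad part is $O(1-x(\alpha_n))$. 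You instead put $y$ itself into the Rayleigh quotient, which removes $\phi_2$ from the estimates entirely: no contradiction argument, no concavity of $\phi_2$, no Lemma \ref{|w|}. The price is that you must control $y$ from above on the bad window $[x_0,1]$, which you do correctly with the additional concavity estimate $(1-x)|U'(x)|\le U(x)$, giving $y\le 2s_0$ there; the resulting balance (good part of order $\alpha$ against a bad part of order $1/\alpha$) is if anything more transparent than the paper's. Your parity argument for the oddness of $\phi_2$ (an even eigenfunction with exactly one interior zero would satisfy $\phi_2(0)=\phi_2'(0)=0$, hence vanish) is a valid substitute for the paper's construction of the odd extension of the half-interval first eigenfunction. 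All the individual steps check out, including the cancellation of the $(x-1)^2U'\cdot\lambda x^lf'(U)$ terms in the key identity and the integrability of $x^{l-1}$ near $0$ for every $l>0$.
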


To this end we need the following two lemmas.

\begin{lem}\label{phi_2}
 Let $\phi_2$ be an eigenfunction corresponding to the second eigenvalue
 $\mu_2(\alpha)$ of \eqref{Morse1}.
 Then $\phi_2$ is odd, $\phi_2(0)=\phi_2(1)=0$ and $\phi_2(x) \ne 0$ for
 $x \in (0,1)$.
\end{lem}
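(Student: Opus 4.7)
The plan is to exploit the evenness of the coefficient $|x|^l f'(U(x;\alpha))$ in $x$, together with the fact that the second eigenfunction is unique up to a scalar and has exactly one zero in $(-1,1)$.

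First I would observe that since $U(x;\alpha)$ is even in $x$ and $|x|^l$ is even, the equation in \eqref{Morse1} is invariant under $x \mapsto -x$. Hence, if $\phi_2(x)$ is an eigenfunction for $\mu_2(\alpha)$, then so is $\phi_2(-x)$, and consequently both the even part $\phi_2^{\mathrm{e}}(x):=[\phi_2(x)+\phi_2(-x)]/2$ and the odd part $\phi_2^{\mathrm{o}}(x):=[\phi_2(x)-\phi_2(-x)]/2$ are eigenfunctions or identically zero. Because the eigenspace for $\mu_2(\alpha)$ is one-dimensional, exactly one of these parts must vanish identically; thus $\phi_2$ is either even or odd.

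Next I would rule out the even case. If $\phi_2$ were even, then $\phi_2'(0)=0$. Since $\phi_2$ has exactly one zero in $(-1,1)$, this zero must be at $0$ (any zero $x_0\ne 0$ would be paired with $-x_0$ by evenness, giving two zeros). But then $\phi_2(0)=\phi_2'(0)=0$, and the uniqueness theorem for the linear ODE forces $\phi_2\equiv 0$, a contradiction. Therefore $\phi_2$ must be odd, which immediately gives $\phi_2(0)=0$; the boundary condition supplies $\phi_2(1)=0$.

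Finally, since $\phi_2$ is odd and has exactly one zero in $(-1,1)$, that zero is $0$; any additional zero $x_1 \in (0,1)$ would give a paired zero $-x_1 \in (-1,0)$, contradicting the known zero-count of the second eigenfunction. Hence $\phi_2(x)\ne 0$ for $x \in (0,1)$. The argument is entirely structural; no estimate is really an obstacle here, the only point requiring a bit of care is invoking the uniqueness of the eigenspace (codimension of $\ker$) to force $\phi_2$ into a definite parity class before applying the ODE uniqueness argument.
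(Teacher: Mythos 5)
Your argument is correct, but it takes a genuinely different route from the paper. The paper proves the lemma constructively: it takes the first eigenvalue $M_1$ of the Dirichlet problem on the half-interval $(0,1)$, extends its (sign-definite) eigenfunction $\Phi_1$ to $[-1,1]$ by odd reflection, checks that the glued function is a genuine $C^2$ eigenfunction of \eqref{Morse1} (this requires verifying $\lim_{x\to-0}\Phi''(x)=0$), and then identifies $M_1$ with $\mu_2(\alpha)$ by counting the single interior zero. You instead argue structurally: the evenness of the coefficient plus the simplicity of the eigenvalue forces $\phi_2$ into a definite parity class, and the even case is excluded because an even eigenfunction with exactly one zero would have to satisfy $\phi_2(0)=\phi_2'(0)=0$ and hence vanish identically by ODE uniqueness (which applies since $|x|^l f'(U)$ is continuous at $0$ for $l>0$, so the eigenfunction is $C^2$ and its derivative vanishes at the origin by evenness). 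Both proofs are complete. Your version is shorter and avoids both the auxiliary half-interval eigenvalue problem and the regularity check at the gluing point; the paper's version buys slightly more, namely the explicit identification $\mu_2(\alpha)=M_1$ with the first Dirichlet eigenvalue on $(0,1)$, which is a natural piece of structural information even though only the stated conclusion of the lemma is used later (in the proof of Lemma \ref{mu_2<0}, where the integration is carried out over $(0,1)$).
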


\begin{proof}
 Let $M_1$ be the first eigenvalue of 
 \begin{equation*}
  \left\{
   \begin{array}{l}
    \Phi'' + \lambda(\alpha) |x|^l f'(U(x;\alpha)) \Phi + M \Phi = 0,
     \quad x \in (0,1), \\[1ex]
     \Phi(0) = \Phi(1) = 0
   \end{array}
  \right.
 \end{equation*}
 and let $\Phi_1$ be an eigenfunction corresponding to $M_1$.
 Then $\Phi_1(0)=\Phi_1(1)=0$ and $\Phi_1(x) \ne 0$ on $(0,1)$.
 Set
 \[
  \Phi(x) = \left\{
   \begin{array}{ll}
    \Phi_1(x), & x \in [0,1], \\[1ex]
    -\Phi_1(-x), & x \in [-1,0).
   \end{array}
  \right.
 \]
 Noting that
 \[
  \lim_{x\to-0} \Phi''(x) = \lim_{x\to-0} (-\Phi_1''(-x)) = -\Phi_1''(0) = 0,
 \]
 we easily check that $\Phi$ is a solution of
 \[
   \left\{
   \begin{array}{l}
    \Phi'' + \lambda(\alpha) |x|^l f'(U(x;\alpha)) \Phi + M_1 \Phi = 0,
     \quad x \in (-1,1), \\[1ex]
     \Phi(-1) = \Phi(1) = 0,
   \end{array}
   \right.
 \]
 and $\Phi$ is odd, $\Phi(x)\ne 0$ on $(0,1)$ and $\Phi(0)=0$.
 Therefore, $M_1$ is an eigenvalue of \eqref{Morse1} and 
 $\Phi$ is an eigenfunction corresponding to $M_1$.
 Since $\Phi$ has exactly one zero in $(-1,1)$, $M_1$ must be $\mu_2$
 and hence $\phi_2(x)$ must be $c\Phi(x)$ for some $c \ne 0$.
\end{proof}

\begin{lem}\label{|w|}
 Assume that $w \in C[a,b]$ is positive and concave on $(a,b)$.
 Let $\rho \in (0,1/2)$.
 Then $w(x) \ge \rho \max_{\xi \in [a,b]} w(\xi)$ for
 $x \in [(1-\rho)a+\rho b, \rho a + (1-\rho)b]$.
\end{lem}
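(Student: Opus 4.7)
The plan is to use the chord-below-graph characterization of concavity: for any three points $a\le s\le t\le b$, concavity of $w$ yields the linear interpolation lower bound $w(s)\ge \frac{t-s}{t-a}w(a)+\frac{s-a}{t-a}w(t)$ (and the analogous inequality on a right subinterval). Positivity of $w$ on $(a,b)$ and continuity on $[a,b]$ give $w(a),w(b)\ge 0$, so the endpoint term may always be dropped.

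Let $M:=\max_{[a,b]}w$ and fix $x_0\in[a,b]$ with $w(x_0)=M$ (it exists by compactness). For $x$ in the prescribed middle interval $I_\rho:=[(1-\rho)a+\rho b,\ \rho a+(1-\rho)b]$, I would consider two cases. If $x\le x_0$, I apply concavity on $[a,x_0]$ to get $w(x)\ge \frac{x-a}{x_0-a}w(x_0)=\frac{x-a}{x_0-a}M$, where $w(a)\ge 0$ was discarded. Since $x\ge (1-\rho)a+\rho b$, one has $x-a\ge \rho(b-a)\ge \rho(x_0-a)$, so $w(x)\ge \rho M$. The case $x\ge x_0$ is symmetric: concavity on $[x_0,b]$ gives $w(x)\ge \frac{b-x}{b-x_0}M$, and $x\le \rho a+(1-\rho)b$ yields $b-x\ge \rho(b-a)\ge \rho(b-x_0)$. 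The degenerate situations ($x_0=a$, $x_0=b$, or $x=x_0$) reduce trivially to one of these estimates (or are immediate, since $w(x)=M$).

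There is no real obstacle: the only subtle point is noticing that one may safely drop the boundary contribution thanks to $w(a),w(b)\ge 0$, so that $x_0$ can sit anywhere in $[a,b]$ without changing the argument. The hypothesis $\rho<1/2$ is used only to guarantee that $I_\rho$ is a nondegenerate subinterval of $[a,b]$; the inequality itself holds for every $\rho\in(0,1)$ with the obvious convention when $I_\rho$ collapses.
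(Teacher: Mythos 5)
Your proof is correct and takes essentially the same route as the paper's: both bound $w$ from below on $[a,c]$ and $[c,b]$ (with $c$ a maximizer) by the chord through the maximum, discard the nonnegative endpoint value, and compare $x-a$ (resp.\ $b-x$) with $\rho(b-a)$. The only cosmetic difference is that the paper organizes the case split at the midpoint $(a+b)/2$ using the minimum of two linear functions, whereas you split at the maximizer $x_0$; the estimates are identical.
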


\begin{proof}
 We take $c \in [a,b]$ for which $w(c)=\max_{\xi \in [a,b]} w(\xi)$. 
 Then $w(c)>0$.
 Since $w$ is positive and concave on $(a,b)$, we have
 \[
  w(x) \ge \frac{w(c) (x-a)}{c-a} 
  \ge \frac{w(c) (x-a)}{b-a} =: l_1(x), \quad x \in [a,c],
 \]
 and
 \[
  w(x) \ge \frac{w(c) (b-x)}{b-c}
  \ge \frac{w(c) (b-x)}{b-a} =: l_2(x), \quad x \in [c,b].
 \]
 Hence $w(x) \ge \min \{ l_1(x), l_2(x) \}$ on $[a,b]$.
 We conclude that if $x \in [(1-\rho)a+\rho b, (a+b)/2]$, then
 \[
  \min \{ l_1(x), l_2(x) \} = l_1(x) 
  \ge l_1((1-\rho)a+\rho b)
  = \rho w(c),
 \]
 and if $x \in [(a+b)/2,\rho a+(1-\rho) b]$, then
 \[
  \min \{ l_1(x), l_2(x) \} = l_2(x) 
  \ge l_2(\rho a+(1-\rho)b)
  = \rho w(c).
 \]
 The proof is complete.
\end{proof}

Now we are ready to prove Lemma \ref{mu_2<0}.

\begin{proof}[Proof of Lemma \ref{mu_2<0}]
 Let $\alpha>0$ be sufficiently large.
 We use the following comparison function $y(x)$ introduced in \cite{T2013}:
 \[
  y(x)=xU(x;\alpha)-(x-1)^2U'(x;\alpha).
 \]
 This function $y(x)$ satisfies $y(0)=y(1)=0$, $y(x)>0$ on $(0,1)$, and
 \[
   y'' + \lambda(\alpha) |x|^l f'(U(x;\alpha)) y
   = \lambda(\alpha) x^{l-1} H(x;\alpha) f(U(x;\alpha)),
   \quad x \in (0,1],
 \]
 where
 \[
  H(x;\alpha) = [g(U(x;\alpha))+l+3] x^2 - 2(l+2)x + l.
 \]
 Let $\phi_2$ be an eigenfunction corresponding to $\mu_2(\alpha)$.
 From Lemma \ref{phi_2} it follows that
 $\phi_2(0)=\phi_2(1)=0$ and $\phi_2(x) \ne 0$ for $x \in (0,1)$.
 Without loss of generality, we may assume that $\phi_2(x)>0$ for
 $x \in (0,1)$ and $\max_{\xi \in [0,1]} \phi_2(\xi)=1$.
 We observe that
 \[
  (y'\phi_2 - y\phi_2')' 
   = \mu_2(\alpha) \phi_2 y
    + \lambda(\alpha) x^{l-1} H(x;\alpha) f(U(x;\alpha)) \phi_2,
   \quad x \in (0,1].
 \]
 Integrating this equality on $(0,1)$, we obtain
 \begin{multline}
  \mu_2(\alpha) \int_0^1 \phi_2(x) y(x) dx \\
  + \lambda(\alpha) \int_0^1 x^{l-1} H(x;\alpha) f(U(x;\alpha)) \phi_2(x) dx =0.
   \label{1}
 \end{multline}
 Since
 \begin{align*}
 H(x;\alpha)
 = & \ [g(U(x;\alpha))+l+3]
   \left( x - \frac{l+2}{g(U(x;\alpha))+l+3} \right)^2 \\
  & \ + \frac{l[g(U(x;\alpha))-1]-4}{g(U(x;\alpha))+l+3} \\
 \ge & \ \frac{l[g(U(x;\alpha))-1]-4}{g(U(x;\alpha))+l+3},
 \end{align*}
 we have
 \begin{multline}
  \int_0^1 x^{l-1} H(x;\alpha) f(U(x;\alpha)) \phi_2(x) dx \\
   \ge \int_0^1 x^{l-1}
   \frac{l[g(U(x;\alpha))-1]-4}{g(U(x;\alpha))+l+3}
  f(U(x;\alpha)) \phi_2(x) dx.
   \label{2}
 \end{multline}
 By \eqref{limg}, there exist $\delta>0$ and sufficiently large $s_0>0$
 such that
 \[
   \frac{l(g(s)-1)-4}{g(s)+l+3} \ge \delta, \quad s \ge s_0.
 \]
 Since $U''(x;\alpha)=-\lambda(\alpha)|x|^l f(U(x;\alpha))<0$ on
 $(0,1]$, we find that $U'(x;\alpha)$ is decreasing in $x \in (0,1]$.
 From $U'(0;\alpha)=0$, it follows that $U'(x;\alpha)<0$ for
 $x \in (0,1]$, which implies that $U(x;\alpha)$ is also decreasing
 in $x \in (0,1]$.
 Now let $\alpha>s_0$.
 Then there exists $x(\alpha) \in (0,1)$ such that
 $U(x;\alpha) \ge s_0$ for $x \in [0,x(\alpha)]$ and
 $U(x;\alpha) < s_0$ for $x \in (x(\alpha),1]$.
 Since $U(x;\alpha)$ is concave on $(0,1)$, we conclude that
 \[
  U(x;\alpha) \ge \alpha (1-x), \quad x \in [0,1],
 \]
 which shows that if $x \in [0,(\alpha-s_0)/\alpha]$, then
 $U(x;\alpha)\ge s_0$.
 Therefore, $x(\alpha) \ge (\alpha-s_0)/\alpha$, which implies
 \begin{equation}
  \lim_{\alpha\to\infty} x(\alpha) = 1.
   \label{x(a)->1}
 \end{equation}
 We take $s_1 \ge s_0$ for which $x(\alpha) \ge 3/4$ for $\alpha \ge s_1$.
 If $\alpha \ge s_1$, then
 \begin{multline}
   \int_0^{x(\alpha)} x^{l-1}
   \frac{l[g(U(x;\alpha))-1]-4}{g(U(x;\alpha))+l+3}
  f(U(x;\alpha)) \phi_2(x) dx \\
  \ge \int_0^{x(\alpha)} x^{l-1} \delta f(s_0) \phi_2(x) dx
  \ge \delta f(s_0) \int_{1/4}^{3/4} x^{l-1} \phi_2(x) dx.
   \label{3}
 \end{multline}
 Recalling $\max_{\xi \in [0,1]} \phi_2(\xi)=1$, we have
 \begin{align}
    \int_{x(\alpha)}^1 x^{l-1} &
   \frac{l[g(U(x;\alpha))-1]-4}{g(U(x;\alpha))+l+3}
  f(U(x;\alpha)) \phi_2(x) dx
  \label{4} \\
  \ge & \  -(l+4) \int_{x(\alpha)}^1
    x^{l-1} \frac{f(U(x;\alpha)) \phi_2(x)}{g(U(x;\alpha))+l+3} dx
  \nonumber \\
  \ge & -(l+4) \int_{x(\alpha)}^1 \frac{f(s_0)}{l+3} dx
  \nonumber \\
  = & \ -\frac{(l+4)f(s_0)}{l+3} (1-x(\alpha)), \quad \alpha \ge s_0.
  \nonumber 
 \end{align}
 
 Now we will show that there exists $s_2 \ge s_1$ such that
 $\mu_2(\alpha)<0$ for $\alpha \ge s_2$.
 Assume to the contrary that there exists $\{\alpha_n\}_{n=1}^\infty$
 such that $\mu_2(\alpha_n) \ge 0$ and $\alpha_n \ge s_1$ for
 $n \in {\bf N}$ and $\lim_{n\to\infty} \alpha_n=\infty$.
 Since $\phi_2(x)>0$ and
 $\phi_2''(x)=-|x|^l f'(U(x;\alpha_n)) \phi_2 - \mu_2(\alpha_n) \phi_2 \le 0$
 on $(0,1)$, we find that $\phi_2$ is concave on $(0,1)$ when
 $\alpha=\alpha_n$.
 From Lemma \ref{|w|} with $\rho=1/4$, $a=0$ and $b=1$, it follows that
 \begin{equation}
  \phi_2(x) \ge \frac{1}{4} \max_{\xi \in [0,1]} \phi_2(\xi)
   = \frac{1}{4} \quad \mbox{for} \
    x \in \left[\frac{1}{4},\frac{3}{4}\right], \ \alpha = \alpha_n.
   \label{5}
 \end{equation}
 Combining \eqref{1} with \eqref{2}, \eqref{3}--\eqref{5}, we conclude that
 \begin{align*}
 0 & \ge - \mu_2(\alpha_n) \int_0^1 \phi_2(x) y(x) dx \\
 & \ge \lambda(\alpha_n)f(s_0)
 \left[ \frac{\delta}{4} \int_{1/4}^{3/4} x^{l-1} dx
  -\frac{l+4}{l+3} (1-x(\alpha_n))
  \right],
 \end{align*}
 which implies
 \begin{equation*}
  \frac{l+4}{l+3} (1-x(\alpha_n))
  \ge \frac{\delta}{4} \int_{1/4}^{3/4} x^{l-1} dx >0,
  \quad n \in {\bf N}.
 \end{equation*} 
 This contradicts the fact \eqref{x(a)->1}.
 Consequently, there exists $s_2 \ge s_1$ such that
 $\mu_2(\alpha)<0$ for $\alpha \ge s_2$.
 This completes the proof of Lemma \ref{mu_2<0}.
\end{proof}

\section{Existence of another large solution}

In this section we give a criterion for the existence of a large
positive solution if there exists a positive even solution with the
Morse index 2. 

We consider the following problem
\begin{equation}
 \left\{
 \begin{array}{l}
  u'' + h(x)f(u) = 0, \quad x \in (-1,1), \\[1ex]
  u(-1)=u(1)= 0.
 \end{array}
 \right.
 \label{GP}
\end{equation}
Throughout this section, the following conditions are assumed to hold:
$h \in C[-1,1]$,
$h(x) \ge 0$ for $x \in [-1,1]$, $h(x)$ has at most finite zeros in
$[-1,1]$, $f \in C^1[0,\infty)$, $f(s)>0$, $f'(s)\ge0$ for $s \ge 0$,
and
\begin{equation}
  \lim_{s\to\infty} \frac{f(s)}{s} = \infty.
  \label{f(s)/s->oo}
\end{equation}
The purpose of this section is to prove the following existence result
which will be used in the proof of Theorem \ref{main}.

\begin{lem}\label{existPS}
 Assume that \eqref{GP} has a positive solution $U$ for which the Morse
 index of $U$ is $2$ and $U$ is nondegenerate.
 Then \eqref{GP} has a positive solution $u$ such that $u \not\equiv U$
 and $M f(\|u\|_\infty) > \| U \|_\infty$, where
 \[
  M = \int_{-1}^1 \int_{-1}^x h(t) dt dx.
 \]
\end{lem}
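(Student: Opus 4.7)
The plan is Leray--Schauder degree theory in the Banach space $X=\{u\in C^1[-1,1]:u(\pm1)=0\}$. Define the compact solution operator $T:X\to X$ by taking $Tu$ to be the unique $v$ satisfying $-v''=h(x)f(u^+)$ with $v(\pm1)=0$, so that positive fixed points of $T$ are exactly the positive solutions of \eqref{GP}. Integrating the equation twice and using $f(u(x))\le f(\|u\|_\infty)$ yields the a priori estimate $\|u\|_\infty\le Mf(\|u\|_\infty)$ for every positive solution $u$. Since $f$ is nondecreasing, the target inequality $Mf(\|u\|_\infty)>\|U\|_\infty$ is equivalent to $\|u\|_\infty>r^\ast$, where $r^\ast$ is the largest root of $Mf(r^\ast)=\|U\|_\infty$; the a priori bound applied to $U$ gives $r^\ast\le\|U\|_\infty$, with strict inequality in the generic case.

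Next, by the standard correspondence between the Leray--Schauder index and the Morse index for a nondegenerate solution of a Dirichlet problem, the hypothesis $m(U)=2$ yields $\mathrm{ind}(I-T,U)=(-1)^2=+1$. On the other hand, the superlinearity assumption \eqref{f(s)/s->oo} forces $\deg(I-T,B_R,0)=0$ for every sufficiently large ball $B_R\subset X$: a forcing-type homotopy $H(u,\tau)=u-Tu-\tau\phi_1$, with $\phi_1>0$ the principal Dirichlet eigenfunction of $-d^2/dx^2$ on $(-1,1)$, has no fixed points for $\tau$ large once one tests against $\phi_1$ and exploits $f(s)/s\to\infty$; homotopy invariance then produces the vanishing degree.

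To conclude I would argue by contradiction. If every positive solution $u\not\equiv U$ satisfied $\|u\|_\infty\le r^\ast$, then additivity and excision of the degree would give $\deg(I-T,B_{r^\ast+\varepsilon},0)=-1$ for all sufficiently small $\varepsilon>0$, because the only fixed point in $B_R\setminus\overline{B_{r^\ast+\varepsilon}}$ would be $U$, contributing $+1$, forcing the remaining piece to contribute $-1$. The main obstacle is to rule this out by an independent calculation, since the naive homotopy $u-\tau Tu$ on $\partial B_{r^\ast+\varepsilon}$ only excludes boundary fixed points for $\tau<r^\ast/\|U\|_\infty$. My plan to overcome this is to modify $f$ above $r^\ast+\varepsilon$ to a nondecreasing, asymptotically sublinear nonlinearity $\tilde f$ which agrees with $f$ on $[0,r^\ast+\varepsilon]$; for a sufficiently tame truncation the modified problem admits a unique positive solution obtained by monotone iteration from $0$, so the corresponding modified degree on $B_{r^\ast+\varepsilon}$ equals $+1$. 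Since $\tilde f=f$ on the range of $u^+$ for $u\in\overline{B_{r^\ast+\varepsilon}}$, the two operators coincide there and hence their degrees agree, contradicting $-1$ and producing the desired positive solution $u\not\equiv U$ with $Mf(\|u\|_\infty)>\|U\|_\infty$.
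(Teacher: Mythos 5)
Your proposal takes a genuinely different route from the paper (global Leray--Schauder degree versus a shooting argument), but as written it has gaps that I do not think can be closed along the lines you sketch. The decisive one is the last step. Under your contradiction hypothesis, the truncated operator $\tilde T$ \emph{coincides} with $T$ on $\overline{B_{r^\ast+\varepsilon}}$, so its fixed points in that ball and its degree there are identical to those of $T$; no independent computation of $\deg(I-\tilde T,B_{r^\ast+\varepsilon},0)$ is available. The claim that a tame sublinear truncation yields a unique positive solution by monotone iteration is unfounded (sublinearity alone does not give uniqueness, and in any case the truncation cannot remove whatever solutions already sit inside the ball), and the a priori bound for the truncated problem only gives $\|u\|_\infty\le M\tilde f(\|u\|_\infty)\le Mf(r^\ast+\varepsilon)\approx\|U\|_\infty$, which exceeds $r^\ast+\varepsilon$; so the fixed points of $\tilde T$ need not lie in $B_{r^\ast+\varepsilon}$ and the total degree $+1$ of the truncated problem (which does hold on a \emph{large} ball) cannot be localized there. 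There are further unaddressed points: the computation $\deg(I-T,B_R,0)=0$ via the forcing homotopy requires uniform a priori bounds for the family $u=Tu+\tau\phi_1$, which you assert but do not prove; the quantity $r^\ast$ need not be well defined or satisfy $r^\ast<\|U\|_\infty$ strictly since $f$ is only assumed nondecreasing (so $\|u\|_\infty>r^\ast$ gives only $Mf(\|u\|_\infty)\ge\|U\|_\infty$, not the required strict inequality); and the decomposition assumes $U\notin\overline{B_{r^\ast+\varepsilon}}$, which is not guaranteed.

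The paper's proof is more elementary and delivers the quantitative bound directly. It shoots from the left endpoint: $u(x;\beta)$ solves the initial value problem with $u(-1)=0$, $u'(-1)=\beta$, and $z(\beta)$ denotes its first zero in $(-1,\infty)$. Lemma \ref{z(oo)<1} uses \eqref{f(s)/s->oo}, the concavity estimate of Lemma \ref{|w|} and Sturm comparison to show $z(\beta)<1$ for all large $\beta$. Lemma \ref{z'(U'(-1))>0} is where the hypothesis enters: since $U$ is nondegenerate with Morse index $2$, Sturm comparison against $\phi_2$ and $\phi_3$ forces the variational solution $v=\frac{\partial u}{\partial\beta}$ of \eqref{wIe} to have exactly two zeros in $(-1,1)$ with $v(1)>0$, whence $z'(U'(-1))>0$ by \eqref{z'}. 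The intermediate value theorem then produces $\beta_0>U'(-1)$ with $z(\beta_0)=1$, i.e.\ a second positive solution $u$ with larger initial slope, and integrating the equation twice gives $Mf(\|u\|_\infty)\ge 2\beta_0>2U'(-1)\ge\|U\|_\infty$, the last inequality by concavity of $U$. If you want to salvage a degree-theoretic argument, you would still need a shooting-type a priori bound of the kind in Lemma \ref{z(oo)<1}, and you would lose the explicit lower bound on $f(\|u\|_\infty)$ that the main theorem needs to conclude $\|u\|_\infty\to\infty$ as $\lambda\to+0$.
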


Here, the Morse index of $U$ is the number of negative eigenvalues $\mu$
of the problem
\begin{equation}
 \left\{
 \begin{array}{l}
  \phi'' + h(x) f'(U(x)) \phi + \mu \phi = 0,
   \quad x \in (-1,1), \\[1ex]
  \phi(-1) = \phi(1) = 0.
 \end{array}
	\right.
  \label{MorseofU}
\end{equation}

To prove Lemma \ref{existPS}, we extend the domain of $f(s)$ satisfying
$f \in C^1({\bf R})$ and $f(x)>0$ for $x \in {\bf R}$.
We also extend the domain of $h(x)$ satisfying
\begin{equation*}
 h \in C[-1,1], \ h(x) \ge 0 \ \mbox{for} \ x \ge -1 \ \mbox{and} \  
 \liminf_{x\to\infty} h(x)>0.
\end{equation*}

We denote by $u(x;\beta)$ the solution of the initial value problem
\begin{equation*}
 \left\{
  \begin{array}{l}
   u'' + h(x)f(u) = 0, \\[1ex]
   u(-1) = 0, \quad u'(-1)= \beta,
  \end{array}
 \right.
\end{equation*}
where $\beta>0$ is a parameter.
From a general theory on ordinary differential equations 
(see, for example, \cite{Har}), it follows that the solution
$u(x;\beta)$ exists on $[-1,\infty)$, it is unique, and 
$u(x;\beta)$, $u'(x;\beta)$ are $C^1$ functions on the set
$[-1,\infty)\times (0,\infty)$. 
By the same argument as in the proof of Lemma 2.1 in \cite{T2013}, we
easily see that, for each $\beta>0$, $u(x;\beta)$ has a
zero in $[-1,\infty)$. 
For each $\beta>0$, we denote the first zero of
$u(x;\beta)$ in $(-1,\infty)$ by $z(\beta)$.
Since $u(x;\beta)>0$ for $x \in (-1,z(\beta))$, by the
uniqueness of the initial value problem, we have
$u'(z(\beta);\beta)<0$.
Therefore we conclude that
\begin{equation*}
 u(z(\beta);\beta)=0, \quad u'(z(\beta);\beta)<0.
\end{equation*}
The implicit function theorem shows that $z \in C^1(0,\infty)$
and 
\begin{equation}
 z'(\beta)
 = - \frac{\displaystyle\frac{\partial u}{\partial \beta}(z(\beta);\beta)}
          {u'(z(\beta);\beta)}.
  \label{z'}
\end{equation}
By a general theory on ordinary differential equations 
(see, for example, \cite{Har}), we note that
$\frac{\partial u}{\partial \beta}(x;\beta)$ is
a unique solution of the initial value problem
\begin{equation}
 \left\{
  \begin{array}{l}
   v'' + h(x) f'(u)v = 0, \\[1ex]
   v(-1)=0, \quad v'(-1)=1,
  \end{array}
 \right.
 \label{wIe}
\end{equation}
where $u=u(x;\beta)$.

\begin{lem}\label{z(oo)<1}
 There exists $\beta^*>0$ such that $z(\beta) < 1$ for $\beta > \beta^*$.
\end{lem}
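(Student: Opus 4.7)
The plan is to argue by contradiction: suppose there is a sequence $\beta_n\to\infty$ with $z(\beta_n)\ge 1$ for every $n$. Write $u_n(x):=u(x;\beta_n)$, and let $c_n\in(-1,z(\beta_n))$ be the (unique, by concavity) maximum point of $u_n$ on $[-1,z(\beta_n)]$, with $M_n:=u_n(c_n)$. Choosing the extension of $h$ to $[-1,\infty)$ to be bounded, I multiply the ODE by $u_n'$ and integrate from $-1$ to $c_n$ to obtain the energy identity
\[
 \tfrac{1}{2}\beta_n^{\,2}
  = \int_{-1}^{c_n} h(x)f(u_n(x))u_n'(x)\,dx
  \le \|h\|_\infty\, F(M_n),
\]
where $F(s):=\int_0^s f(t)\,dt$. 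Since $\beta_n\to\infty$ and $F$ is continuous and strictly increasing with $F(s)\to\infty$, this forces $M_n\to\infty$.

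Next, $u_n$ is concave on $[-1,z(\beta_n)]$ and vanishes at both endpoints, so the chord lower bounds from $(-1,0)$ to $(c_n,M_n)$ and from $(c_n,M_n)$ to $(z(\beta_n),0)$ yield $u_n(x)\ge \tfrac{1}{2}M_n$ on
\[
 I_n := \bigl[\tfrac{c_n-1}{2},\,\tfrac{c_n+z(\beta_n)}{2}\bigr]\subset[-1,z(\beta_n)],
\]
which has length $(z(\beta_n)+1)/2 \ge 1$. Since $h \in C[-1,1]$ has only finitely many zeros and (after extension) $\liminf_{x\to\infty}h(x)>0$, one can extract constants $h_0>0$ and $\varepsilon>0$ independent of $n$ and a subinterval $J_n\subset I_n$ of length $\ge \varepsilon$ on which $h\ge h_0$.

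Finally, rewriting the ODE as $u_n''+q_n(x)u_n=0$ with $q_n(x):=h(x)f(u_n(x))/u_n(x)$, the monotonicity of $f$ and $u_n\le M_n$ give, for $x\in J_n$,
\[
 q_n(x) \ge \frac{h_0\,f(M_n/2)}{M_n}
  = \frac{h_0}{2}\cdot\frac{f(M_n/2)}{M_n/2} \longrightarrow \infty
\]
by \eqref{f(s)/s->oo}. Sturm's comparison theorem, applied with the constant-coefficient equation $v''+(\inf_{J_n}q_n)v=0$ whose consecutive zeros are at most $\pi/\sqrt{\inf_{J_n}q_n}$ apart, then forces $u_n$ to have a zero in $J_n$ for $n$ large, contradicting $u_n>0$ on $(-1,z(\beta_n))\supset J_n$. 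The main technical hurdle is the uniform selection of $J_n$: the largeness interval $I_n$ need not lie in $[-1,1]$ and may drift to the right when $c_n$ or $z(\beta_n)$ is large, and the hypothesis $\liminf_{x\to\infty}h(x)>0$ is exactly what keeps $h$ bounded below by a positive constant in that regime.
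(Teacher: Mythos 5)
Your proposal is correct in its overall strategy and shares the paper's skeleton (contradiction along $\beta_n\to\infty$ with $z(\beta_n)\ge 1$, a quantitative largeness estimate from concavity, then superlinearity plus Sturm comparison to force a zero), but the implementation differs at every step, and your version manufactures a difficulty that the paper avoids entirely. The paper never leaves $[-1,1]$: since $z(\beta_n)\ge 1$ gives $u_n>0$ on $(-1,1)$, it integrates the equation twice over $[-1,1]$ to obtain $2\beta_n\le f(\|u_n\|_\infty)\int_{-1}^1\int_{-1}^x h(s)\,ds\,dx$ (up to the harmless boundary term $u_n(1)\le\|u_n\|_\infty$), hence $\|u_n\|_{L^\infty[-1,1]}\to\infty$; it then applies Lemma \ref{|w|} with $a=-1$, $b=1$, $\rho=1/4$ to get $u_n\ge\tfrac14\|u_n\|_\infty$ on the \emph{fixed} interval $[-1/2,1/2]$, and compares with the first Dirichlet eigenvalue $\nu_1>0$ of $\phi''+\nu h(x)\phi=0$ on $(-1/2,1/2)$, which is positive precisely because $h$ has only finitely many zeros—so no pointwise lower bound $h\ge h_0$ on a subinterval is ever needed. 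By anchoring your largeness interval $I_n$ at the global maximum point $c_n\in(-1,z(\beta_n))$, you allow $I_n$ to drift rightward, and the uniform extraction of $J_n$ with $h\ge h_0$—which you rightly flag as the main hurdle—then genuinely requires both a deliberate choice of the extension of $h$ (so that it has finitely many zeros on each compact subset of $[1,\infty)$) and a compactness/modulus-of-continuity argument for bounded $c_n$ combined with the $\liminf$ condition for unbounded $c_n$. That step is fillable but is asserted rather than proved, and it is exactly the work that the paper's decision to stay on $[-1,1]$ eliminates. The remaining ingredients of your argument—the energy identity $\tfrac12\beta_n^2\le\|h\|_\infty F(M_n)$ in place of the double integration, and the constant-coefficient Sturm comparison with zero spacing $\pi/\sqrt{\inf q_n}$ in place of the weighted eigenvalue problem—are sound, using \eqref{f(s)/s->oo} in the same way the paper does.
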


\begin{proof}
 Assume that there exists $\{\beta_n\}_{n=1}^\infty$ such that
 $\lim_{n\to\infty} \beta_n = \infty$, $\beta_n>0$ and
 $z(\beta_n) \ge 1$ for $n \in {\bf N}$.
 Set $u_n=u(x;\beta_n)$.
 Then $u_n(x)>0$ for $x \in (-1,1)$.
 Integrating $u_n''+ h(x)f(u_n)=0$ on $[-1,x]$ and integrating it on
 $[-1,1]$ again, we have
 \begin{equation}
  2\beta_n = \int_{-1}^1 \int_{-1}^x h(s) f(u_n(s)) ds dx
          \le f(\|u_n\|_\infty) \int_{-1}^1 \int_{-1}^x h(s)ds dx.
    \label{beta<|u|}
 \end{equation}
 Letting $n \to \infty$ in \eqref{beta<|u|}, we obtain
 \begin{equation}
  \lim_{n\to\infty} \|u_n\|_\infty=\infty.
   \label{un->oo}
 \end{equation}
 From Lemma \ref{|w|} with $a=-1$, $b=1$ and $\rho=1/4$, it follows that
 \begin{equation}
  u_n(x) \ge \frac{1}{4} \|u_n\|_\infty>0, \quad
   x \in \left[-\frac{1}{2},\frac{1}{2}\right].
   \label{un>1/4un}
 \end{equation}
 Let $\nu_1$ be the first eigenvalue of
 \[
  \left\{
  \begin{array}{l}
   \phi'' + \nu h(x) \phi = 0, \quad x \in (-1/2,1/2), \\[1ex]
   \phi(-1/2) = \phi(1/2) = 0.
  \end{array}
  \right.
 \]
 Then $\nu_1>0$. 
 By \eqref{f(s)/s->oo}, there exists $s_1>0$ such that
 \[
  \frac{f(s)}{s} > \nu_1, \quad s > s_1.
 \]
 By \eqref{un->oo}, there exists $n_1>0$ such that
 $\|u_n\|_\infty>4s_1$ for $n \ge n_1$.
 From \eqref{un>1/4un} it follows that if $n \ge n_1$, then
 \[
  h(x) \frac{f(u_n(x))}{u_n(x)} > \nu_1 h(x),
  \quad x \in \left[-\frac{1}{2},\frac{1}{2}\right].
 \]
 Since $u_n$ is a solution of
 \[
  u_n'' + h(x) \frac{f(u_n(x))}{u_n(x)} u_n = 0,
 \]
 Sturm comparison theorem implies that $u_n$ has at least one zero in
 $(-1/2,1/2)$.
 This contradicts \eqref{un>1/4un}.
 Therefore, there exists $\beta^*>0$ such that
 $z(\beta) < 1$ for $\beta > \beta^*$.
\end{proof}

\begin{lem}\label{z'(U'(-1))>0}
 Assume that \eqref{GP} has a positive solution $U$ for which the Morse
 index of $U$ is $2$ and $U$ is nondegenerate.
 Then $z'(U'(-1))>0$.
\end{lem}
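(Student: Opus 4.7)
The plan is to reduce $z'(U'(-1))>0$ to a sign condition on the sensitivity function $v(x):=\partial u/\partial\beta\,(x;U'(-1))$, which, by \eqref{wIe}, is the unique solution of the linearization
\begin{equation*}
 v''+h(x)f'(U(x))v=0,\qquad v(-1)=0,\ v'(-1)=1.
\end{equation*}
Evaluating \eqref{z'} at $\beta=U'(-1)$ (so that $u(\,\cdot\,;\beta)=U$ and $z(\beta)=1$) gives
$z'(U'(-1))=-v(1)/U'(1)$. Since $U>0$ on $(-1,1)$ with $U(\pm1)=0$, uniqueness for the IVP forces $U'(1)<0$, so it is enough to prove $v(1)>0$. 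Note also that every zero of $v$ on $[-1,1]$ is simple, again by uniqueness for the linear IVP.

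The strategy is to pin down the number of zeros of $v$ on $[-1,1]$ by two Sturm comparisons against eigenfunctions of \eqref{MorseofU}. The hypothesis that $U$ has Morse index $2$ and is nondegenerate places the eigenvalues in the configuration $\mu_1<\mu_2<0<\mu_3<\cdots$, and each $\phi_k$ has exactly $k-1$ zeros in $(-1,1)$. Since $v$ satisfies the ODE with coefficient $hf'(U)$ and $\phi_2$ with the strictly smaller coefficient $hf'(U)+\mu_2$ (as $\mu_2<0$), Sturm comparison forces a zero of $v$ strictly between any two consecutive zeros of $\phi_2$. The zeros of $\phi_2$ are $-1$, some $z_2\in(-1,1)$, and $1$, so $v$ has at least one zero in $(-1,z_2)$ and at least one in $(z_2,1)$.

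Running the opposite comparison with $\phi_3$, whose coefficient $hf'(U)+\mu_3$ is strictly larger (as $\mu_3>0$), a zero of $\phi_3$ must lie between any two consecutive zeros of $v$. If $v$ had four or more zeros in $[-1,1]$, then $\phi_3$ would have at least three zeros in $(-1,1)$, contradicting $\phi_3$'s exactly two. Combining the two bounds together with $v(-1)=0$, the function $v$ has exactly three zeros on $[-1,1]$: the endpoint $-1$ and two interior zeros $a_1<a_2$ with $a_1\in(-1,z_2)$ and $a_2\in(z_2,1)$; in particular $v(1)\ne 0$. Since $v'(-1)=1>0$ and the zeros $a_1,a_2$ are simple, the sign of $v$ alternates: $v>0$ on $(-1,a_1)$, $v<0$ on $(a_1,a_2)$, and $v>0$ on $(a_2,1]$. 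Hence $v(1)>0$, which gives $z'(U'(-1))>0$.

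The main obstacle is extracting exact zero-count information from the Morse index: one must coordinate the two Sturm comparisons in opposite directions — using $\phi_2$ for a lower bound on the number of zeros of $v$ and $\phi_3$ for an upper bound — so that the count is pinned down precisely and the sign of $v$ at $x=1$ is forced.
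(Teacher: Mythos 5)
Your proposal is correct and follows essentially the same route as the paper's proof: reduce to the sign of $v(1)$ via \eqref{z'}, use Sturm comparison with $\phi_2$ (Morse index $2$, so $\mu_2<0$) to force at least two interior zeros of $v$, use comparison with $\phi_3$ (nondegeneracy gives $\mu_3>0$) to cap the count and get $v(1)\ne 0$, and then conclude $v(1)>0$ from $v'(-1)=1$ and sign alternation.
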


\begin{proof}
 First we note that $U(x)=u(x;U'(-1))$ for $x \in [-1,1]$ and $z(U'(-1))=1$.
 Let $\mu_2$ and $\mu_3$ be the second and third eigenvalues of
 \eqref{MorseofU}, respectively.
 Then $\mu_2<0<\mu_3$.
 Let $\phi_2$ and $\phi_3$ be eigenfunctions
 corresponding to $\mu_2$ and $\mu_3$, respectively.
 Let $v$ be the solution of \eqref{wIe} with $u=U$.
 We recall that $v(x)\equiv\frac{\partial u}{\partial \beta}(x;U'(-1))$.
 Since $\phi_2$ has exactly one zero in $(-1,1)$, Sturm comparison
 theorem implies that $v$ has at least two zeros in $(-1,1)$.
 If $v$ has three zeros in $(-1,1]$, then, by Sturm comparison theorem
 again, $\phi_3$ has at least three zeros in $(-1,1)$, which is a
 contradiction.
 Therefore, $v$ has exactly two zeros in $(-1,1)$ and $v(1) \ne 0$.
 Since $v'(-1)=1>0$, we conclude that $v(1)>0$.
 Since $\frac{\partial u}{\partial \beta}(z(U'(-1));U'(-1))=v(1)>0$
 and $u'(z(U'(-1));U'(-1))=U'(1)<0$, by \eqref{z'}, we obtain
 $z'(U'(-1))>0$.
 \end{proof}

Now we are ready to show Lemma \ref{existPS}.

\begin{proof}[Proof of Lemma \ref{existPS}]
 By Lemma \ref{z(oo)<1}, there exists $\beta^*>0$ such that
 $z(\beta)<1$ for $\beta>\beta^*$.
 Hence, by Lemma \ref{z'(U'(-1))>0} and $z(U'(-1))=1$, there exists
 $\beta_0 \in (U'(-1),\beta^*)$ such that $z(\beta_0)=1$.
 Then $u:=u(x;\beta_0)$ is a positive solution of \eqref{GP}.
 Since $u'(-1)=\beta_0>U'(-1)$, we conclude that $u \not\equiv U$, by
 the uniqueness of the initial value problem.
 Integrating $u''+h(x)f(u)=0$ on $[-1,x]$ and
 integrating it on $[-1,1]$ again, we have
 \begin{equation*}
  2 \beta_0
   = \int_{-1}^1 \int_{-1}^x h(t) f(u(t)) dt dx \\
   \le M f(\|u\|_\infty).
 \end{equation*}
 Let $c \in (-1,1)$ satisfy $U(c)=\|U\|_\infty$.
 Since $U$ is concave on $(-1,1)$, we have
 \[
  U(x) \ge \frac{\| U \|_\infty}{c+1} (x+1), \quad x \in [-1,c].
 \]
 Hence,
 \[
  U'(-1)
   = \lim_{x\to-1} \frac{U(x)-U(-1)}{x+1}
   = \lim_{x\to-1} \frac{U(x)}{x+1}
   \ge \frac{\| U \|_\infty}{c+1}
   \ge \frac{\| U \|_\infty}{2}.
 \]
 Consequently,
 \[
  M f(\|u\|_\infty) \ge 2\beta_0 > 2U'(-1) \ge \| U \|_\infty.
 \]
\end{proof}

\section{Proof of the main result}

In this section we give a proof of Theorem \ref{main}.

Lemma \ref{mu1} means (i) and (ii) of Theorem \ref{main}.
Moreover, since $\mu_2(\alpha)>\mu_1(\alpha)$, we have
\begin{equation}\label{mu2>0}
 \mu_2(\alpha)>0, \quad 0 < \alpha \le \alpha_*. 
\end{equation}
When $f(s)=e^s$, we have $g(s):=sf'(s)/f(s)=s$ and
\[
 \liminf_{s\to\infty} \frac{l(g(s)-1)-4}{g(s)+l+3}=l>0.
\]
From Lemma \ref{mu_2<0} it follows that $\mu_2(\alpha)<0$ for all
sufficiently large $\alpha>0$.
Hence, by \eqref{mu2>0}, there exist $\alpha_1$ and $\alpha_3$ such that
$\alpha_*<\alpha_1 \le \alpha_3$ such that
\begin{equation}\label{mu_2>=0}
 \mu_2(\alpha_1)=0, \quad \mu_2(\alpha)>0, \ \ 0<\alpha<\alpha_1
\end{equation}
and
\begin{equation*}
 \mu_2(\alpha_3)=0, \quad \mu_2(\alpha)<0, \ \ \alpha>\alpha_3.
\end{equation*}
Therefore, Lemma \ref{mu3} implies (vi) and (vii) of Theorem \ref{main}.
From Lemma \ref{mu1} and \eqref{mu_2>=0}, it follows that (iii) and (iv) of
Theorem \ref{main} hold.

Now we will show (v).
To this end, we define $T(\alpha,v)$ by
\[
 T(\alpha,v)
 = \int_{-1}^1 G(x,y) \lambda(\alpha) |y|^l e^{U(y;\alpha)} (e^{v(y)}-1) dy,
\]
where $G(x,y)$ is a Green's function of
the operator $L[v]=-v''$ with $v(-1)=v(1)=0$: 
\[
 G(x,y) =
  \left\{
   \begin{array}{ll}
    (1+x)(1-y)/2, & -1 \le x \le y \le 1, \\[1ex]
    (1-x)(1+y)/2, & -1 \le y \le x \le 1.
   \end{array}
  \right.
\]
Then \eqref{LT1} can be rewritten as
\begin{equation}
 v-T(\alpha,v)=0.
  \label{v-T=0}
\end{equation}
We note that \eqref{v-T=0} has a solution $v=0$ and
if $v$ is a solution of \eqref{v-T=0}, then $u(x)=U(x;\alpha)+v(x)$
is a solution of \eqref{LT1} at $\lambda=\lambda(\alpha)$.
 
\begin{lem}\label{gamma=m}
 Let $\gamma(\alpha)$ be the sum of algebraic multiplicities of all the
 eigenvalues of $T_v'(\alpha,0)$ contained in $(1,\infty)$.
 Then $m(\alpha)=\gamma(\alpha)$.
\end{lem}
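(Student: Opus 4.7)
The plan is to translate both $m(\alpha)$ and $\gamma(\alpha)$ into zero-counts for the same ODE family, then read off the identity via Sturm oscillation. First I would compute the Fr\'echet derivative. Differentiating under the integral at $v=0$,
\[
 T_v'(\alpha,0)\phi(x)
 = \int_{-1}^1 G(x,y)\, q(y)\, \phi(y)\, dy =: K\phi(x),
\]
where $q(y) := \lambda(\alpha)|y|^l e^{U(y;\alpha)}$. Since $G$ is the Green's function for $L := -d^2/dx^2$ with Dirichlet data, the equation $K\phi = \kappa\phi$ with $\kappa \ne 0$ is equivalent to the two-point boundary value problem $\phi'' + (q(x)/\kappa)\phi = 0$, $\phi(\pm 1)=0$. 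Writing $\beta := 1/\kappa$, the nonzero eigenvalues of $K$ correspond bijectively to the eigenvalues of the weighted Sturm--Liouville problem $\phi'' + \beta q(x)\phi = 0$, $\phi(\pm 1)=0$. Because $q\ge 0$ on $[-1,1]$ with $q>0$ off $x = 0$, standard Sturm--Liouville theory furnishes eigenvalues $0<\beta_1<\beta_2<\cdots\to\infty$, each geometrically simple, with the $k$-th eigenfunction having exactly $k-1$ interior zeros.

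Next I would verify that geometric and algebraic multiplicities of each $\kappa_k = 1/\beta_k$ coincide, so that
\[
 \gamma(\alpha) = \#\{k : \kappa_k > 1\} = \#\{k : \beta_k < 1\}.
\]
This is because $K = L^{-1}M_q$ is similar to the self-adjoint compact operator $L^{-1/2}M_q L^{-1/2}$ on $L^2(-1,1)$; the two have identical spectra with identical multiplicities, and hence $K$ has no generalized eigenvectors beyond its eigenspaces.

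To close the loop, I would compare $\gamma(\alpha)$ with $m(\alpha)$ via Sturm oscillation applied simultaneously to the two parameter families, which share the ODE $\phi'' + q\phi = 0$ at their respective critical values ($\mu = 0$ for Morse, $\beta = 1$ for weighted SL). Let $\phi_*$ be the solution of $\phi'' + q\phi = 0$ with $\phi_*(-1) = 0$, $\phi_*'(-1) = 1$, and let $N$ denote the number of zeros of $\phi_*$ in $(-1,1]$. By Sturm oscillation applied to each family,
\[
 N = \#\{k : \mu_k(\alpha) \le 0\} = \#\{k : \beta_k \le 1\},
\]
and the condition $\phi_*(1) = 0$ is simultaneously equivalent to ``$0$ is a Morse eigenvalue'' and to ``$1$ is an SL eigenvalue''. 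Removing this common boundary contribution gives $\#\{k : \mu_k(\alpha) < 0\} = \#\{k : \beta_k < 1\}$, that is, $m(\alpha) = \gamma(\alpha)$, and the equality holds uniformly in the degenerate and nondegenerate cases.

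The main obstacle I expect is the algebraic-versus-geometric multiplicity step: without the similarity-to-self-adjoint observation one cannot a priori exclude Jordan blocks for the non-symmetric integral operator $K$ on, say, $C([-1,1])$. Once that is in hand, the oscillation bookkeeping is routine, relying only on the well-known monotonicity in $\mu$ (respectively $\beta$) of the zero-count of the shooting solution of a one-parameter Sturm--Liouville equation.
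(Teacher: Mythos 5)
Your proposal is correct, and its core reduction---identifying the eigenvalues $\kappa>0$ of $T_v'(\alpha,0)$ with the eigenvalues $\beta=1/\kappa$ of the weighted Sturm--Liouville problem $\phi''+\beta q\phi=0$, $\phi(\pm1)=0$, and then invoking Sturm theory---is the same as the paper's. The execution differs in two respects. First, the paper matches the two spectra index by index: it shows $\nu_k(\alpha)>1\Leftrightarrow\mu_k(\alpha)<0$ by two applications of the Sturm comparison theorem to the $k$-th eigenfunctions of the two problems (each of which has exactly $k-1$ interior zeros), whereas you count zeros of the single shooting solution $\phi_*$ of the common equation $\phi''+q\phi=0$ and observe that this count simultaneously equals $\#\{k:\mu_k\le0\}$ and $\#\{k:\beta_k\le1\}$; both routes are standard oscillation bookkeeping and equally valid. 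Second, you explicitly address the algebraic-versus-geometric multiplicity issue by noting that $K=L^{-1}M_q$ is similar to the self-adjoint nonnegative operator $L^{-1/2}M_qL^{-1/2}$, hence has no Jordan blocks. The paper is silent on this point, yet it is genuinely needed: $\gamma(\alpha)$ is defined via algebraic multiplicities precisely because that is what enters the Leray--Schauder index formula $(-1)^{\gamma}$ used in Lemma \ref{deg}, so your observation fills a small gap rather than adding redundancy. The only detail you might add is that the similarity argument lives on $L^2(-1,1)$ while the degree computation is carried out in $C[-1,1]$; since any generalized eigenvector of the integral operator is automatically continuous (bootstrap through the kernel $G(x,y)q(y)$), the algebraic multiplicities agree in the two spaces and the transfer is harmless.
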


\begin{proof}
 First we note that an eigenvalue $\nu$ of $T_v'(\alpha,0)$ with $\nu>1$
 is an eigenvalue 
 of the problem
 \begin{equation}
   \left\{
  \begin{array}{l}
   \psi'' + \frac{1}{\nu} \lambda(\alpha) |x|^l e^{U(x;\alpha)} \psi = 0,
    \quad x \in (-1,1), \\[1ex]
    \psi(-1) = \psi(1) = 0.
  \end{array}
   \right.
   \label{T'}
 \end{equation}
 We conclude that \eqref{T'} has eigenvalues
 $\{\nu_k(\alpha)\}_{k=1}^\infty$ for which
 \[
  \nu_1(\alpha) > \nu_2(\alpha) > \cdots > \nu_k(\alpha) >
  \nu_{k+1}(\alpha) > \cdots > 0, \quad
  \lim_{k\to\infty} \nu_k(\alpha) = 0,
 \]
 no other eigenvalues, an eigenfunction $\psi_k$ corresponding to
 $\nu_k(\alpha)$ is unique up to a constant, and $\psi_k$ has exactly
 $k-1$ zeros in $(-1,1)$.

 Next we will show that $\nu_k(\alpha)>1$ implies $\mu_k(\alpha)<0$.
 Assume that $\nu_k(\alpha)>1$ and $\mu_k(\alpha)\ge 0$.
 Then
 \[
 \frac{1}{\nu_k(\alpha)} \lambda(\alpha) |x|^l e^{U(x;\alpha)} <
  \lambda(\alpha) |x|^l e^{U(x;\alpha)} + \mu_k(\alpha).
 \]
 Sturm comparison theorem implies that an eigenfunction $\phi_k$
 corresponding to $\mu_k(\alpha)$ has at least $k$ zeros in
 $(-1,1)$.
 This is a contradiction.
 Hence, $\nu_k(\alpha)>1$ implies $\mu_k(\alpha)<0$.
 
 Finally we will prove that $\mu_k(\alpha)<0$ implies $\nu_k(\alpha)>1$.
 Assume that $\mu_k(\alpha)<0$ and $\nu_k(\alpha)\le 1$.
 Since
 \[
 \frac{1}{\nu_k(\alpha)} \lambda(\alpha) |x|^l e^{U(x;\alpha)} >
  \lambda(\alpha) |x|^l e^{U(x;\alpha)} + \mu_k(\alpha),
 \]
 By Sturm comparison theorem again, we conclude that an eigenfunction
 $\psi_k$ corresponding to $\nu_k(\alpha)$ has at least one $k$ zeros in
 $(-1,1)$, which is a contradiction.
 Then $\mu_k(\alpha)<0$ implies $\nu_k(\alpha)>1$.

 Consequently, $m(\alpha)=\gamma(\alpha)$.
\end{proof}

\begin{lem}\label{deg}
  For each sufficiently small $\varepsilon>0$, there exists 
  $(\alpha_\varepsilon,v_\varepsilon)$ such that
  $\alpha_1-\varepsilon \le \alpha_\varepsilon \le \alpha_3+\varepsilon$,
  $v_\varepsilon \in C[-1,1]$, and 
  \[
   v_\varepsilon-T(\alpha_\varepsilon,v_\varepsilon)=0, \quad
   \| v_\varepsilon \|_\infty \le \varepsilon, \quad
   v_\varepsilon \ne 0.
  \]  
\end{lem}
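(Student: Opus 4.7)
The plan is to apply Leray-Schauder degree theory to the compact operator $v \mapsto T(\alpha, v)$ on $C[-1,1]$, pivoting on the trivial fixed point $v = 0$ (which corresponds to the Korman solution $U(\cdot;\alpha)$). Compactness of $T(\alpha, \cdot)$ follows from continuity of the Green's function $G$, and the differentiability estimate $T(\alpha, v) = T_v'(\alpha,0)v + o(\|v\|_\infty)$, uniform in $\alpha$ on compact sets, is immediate from $e^v - 1 = v + O(v^2)$. First I would fix $\varepsilon > 0$ small enough that $\alpha_* < \alpha_1 - \varepsilon$. Then by Lemma \ref{mu1}(iii) together with \eqref{mu_2>=0} one has $m(\alpha_1 - \varepsilon) = 1$, and by Lemma \ref{mu3} together with the defining property of $\alpha_3$ one has $m(\alpha_3 + \varepsilon) = 2$. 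At both endpoints $\mu_k(\alpha) \ne 0$ for every $k$, so $1$ is not an eigenvalue of $T_v'(\alpha, 0)$; hence $v = 0$ is an isolated zero of $I - T(\alpha, \cdot)$ and, via Lemma \ref{gamma=m} and the Krasnoselskii index formula, its Leray-Schauder index satisfies
\[
 \mathrm{ind}(I - T(\alpha_1 - \varepsilon, \cdot), 0) = (-1)^{1} = -1, \quad
 \mathrm{ind}(I - T(\alpha_3 + \varepsilon, \cdot), 0) = (-1)^{2} = +1.
\]

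Next I would argue by contradiction. Suppose the conclusion of the lemma fails for this $\varepsilon$: then $v - T(\alpha, v) \ne 0$ for every $\alpha \in [\alpha_1 - \varepsilon, \alpha_3 + \varepsilon]$ and every $v \in \overline{B_\varepsilon(0)} \setminus \{0\}$, where $B_\varepsilon(0)$ denotes the open $\varepsilon$-ball in $C[-1,1]$. In particular the homotopy $\alpha \mapsto I - T(\alpha, \cdot)$ has no zero on $\partial B_\varepsilon(0)$ for any $\alpha$ in the interval, so homotopy invariance forces $\deg(I - T(\alpha, \cdot), B_\varepsilon(0), 0)$ to be independent of $\alpha$. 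At each endpoint this degree coincides with the index at $0$, because under the standing assumption $v = 0$ is the only fixed point in $\overline{B_\varepsilon(0)}$. The equality $-1 = +1$ that would follow is absurd, whence the existence of the required $(\alpha_\varepsilon, v_\varepsilon)$.

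The main obstacle is not the degree computation itself but the careful justification of the index formula $\mathrm{ind}(I - T(\alpha, \cdot), 0) = (-1)^{\gamma(\alpha)}$. This requires recording that $T_v'(\alpha, 0)$ is a compact linear operator on $C[-1,1]$ (clear from continuity of $G$) and invoking the classical Leray-Schauder/Krasnoselskii index theorem, which expresses the index as a signed count of characteristic values greater than $1$. A secondary technical point is to choose $\varepsilon$ small enough that both shifted endpoints remain in $(\alpha_*, \infty)$ and that the spectrum of $T_v'$ stays bounded away from $1$ there, so that $v = 0$ is isolated uniformly; both conditions follow from continuity of the eigenvalues $\mu_k(\alpha)$.
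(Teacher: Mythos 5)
Your argument is correct and follows essentially the same route as the paper: negate the conclusion, use compactness of $T(\alpha,\cdot)$ and homotopy invariance of the Leray--Schauder degree on $B_\varepsilon(0)$, and derive the contradiction $-1=(-1)^{m(\alpha_1-\varepsilon)}=(-1)^{m(\alpha_3+\varepsilon)}=+1$ via the index formula and Lemma \ref{gamma=m}. The only difference is that you spell out the justification of the Krasnoselskii index formula and the isolatedness of the trivial fixed point, which the paper delegates to a citation.
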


\begin{proof}
 Assume there exists $\varepsilon>0$ such that
 \[
  v-T(\alpha,v) \ne 0 \quad \textup{for} \ 
  \alpha_1-\varepsilon \le \alpha \le \alpha_3+\varepsilon, \
  v \in B_\varepsilon(0)-\{0\},
 \]
 where $B_\varepsilon(0)=\{ v \in C[-1,1] : \| v \|_\infty < \varepsilon \}$.
 Since $T(\alpha,v)$ is a compact operator on $C[0,1]$ for each fixed
 $\alpha>0$, Leray-Schauder degree
 $\mbox{deg}_{\rm LS}(I-T(\alpha,\,\cdot\,),B_\varepsilon(0),0)$
 is well defined in $C[-1,1]$.
 By the homotopy invariance of the Leray Schauder degree, we conclude that
 \[
  \mbox{deg}_{\rm LS}(I-T(\alpha,\,\cdot\,),B_\varepsilon(0),0) \
  \mbox{is\ constant\ for} \
  \alpha_1-\varepsilon \le \alpha \le \alpha_3+\varepsilon.
 \]
 It is known (for example, 
 \cite[Theorem 3.20]{AM}) that
 \[
  \mbox{deg}_{\rm LS}(I-T(\alpha_1-\varepsilon,\,\cdot\,),B_\varepsilon(0),0)
  = (-1)^{\gamma(\alpha_1-\varepsilon)}
 \]
 and
 \[
  \mbox{deg}_{\rm LS}(I-T(\alpha_3+\varepsilon,\,\cdot\,),B_\varepsilon(0),0)
  = (-1)^{\gamma(\alpha_3+\varepsilon)},
 \]
 where $\gamma(\alpha)$ is as in Lemma \ref{gamma=m}.
 Lemma \ref{gamma=m} implies that
 \[
  \gamma(\alpha_1-\varepsilon) = m(\alpha_1-\varepsilon) = 1
 \]
 and
 \[
  \gamma(\alpha_3+\varepsilon) = m(\alpha_3+\varepsilon) = 2,
 \]
 which means that
 \[
  \mbox{deg}_{\rm LS}(I-T(\alpha_1-\varepsilon,\,\cdot\,),B_\varepsilon(0),0) =-1
 \]
 and
 \[
  \mbox{deg}_{\rm LS}(I-T(\alpha_3+\varepsilon,\,\cdot\,),B_\varepsilon(0),0) =1.
 \]
 This contradicts the homotopy invariance of the Leray-Schauder degree.
\end{proof}

 Now we are ready to prove (v) of Theorem \ref{main}.
 Let $\{(\alpha_\varepsilon,v_\varepsilon)\}$ be as in Lemma \ref{deg}.
 Since $\alpha_\varepsilon \in [\alpha_1-\varepsilon,\alpha_3+\varepsilon]$, 
 there exists a subsequence of $\{(\alpha_\varepsilon,v_\varepsilon)\}$,
 again denoted by $\{(\alpha_\varepsilon,v_\varepsilon)\}$ such that
 \[
  \alpha_\varepsilon \to \alpha_2, \quad
  v_\varepsilon \to 0 \quad \mbox{as} \ \varepsilon \to +0
 \]
 for some $\alpha_2 \in [\alpha_1,\alpha_3]$.
 Consequently, $(\lambda(\alpha_2),U(x;\alpha_2))$ is a bifurcation
 point.
 Clearly, $U(x;\alpha_2)$ is degenerate.
 Moreover, $u_\varepsilon(x):=U(x;\alpha_\varepsilon)+v_\varepsilon(x)$
 is a solution of \eqref{LT1}.
 By recalling that $\lambda'(\alpha)<0$ for $\alpha>\alpha_*$,
 there is no even solution $u$ of \eqref{LT1} at
 $\lambda=\lambda(\alpha)$ such that
 $\|u\|_\infty>\alpha_*$ except $U(x;\alpha)$.
 Since
 \[
  \alpha_\varepsilon-\varepsilon
  \le \|u_\varepsilon\|_\infty \le \alpha_\varepsilon+\varepsilon,
 \]
 we conclude that $u_\varepsilon$ is a non-even solution of \eqref{LT1},
 and hence (v) of Theorem \ref{main} holds.

 Finally, we give a proof of the remaining part of Theorem \ref{main},
 that is, we will show that, for each
 $\lambda \in (0,\lambda(\alpha_3))$, problem \eqref{LT1} has a positive
 non-even solution $u(x)$ which satisfies
 $\lim_{\lambda\to+0} \|u\|_\infty = \infty$.
 Let $\lambda \in (0,\lambda(\alpha_3))$.
 Then, by Proposition \ref{even}, there exists $\alpha_\lambda>\alpha_3$ such
 that $\lambda(\alpha_\lambda)=\lambda$,
 $\lim_{\lambda\to+0} \alpha_\lambda=\infty$
 and
 $\lim_{\lambda\to+0} \lambda(\alpha_\lambda)=0$.
 From (vii) of Theorem \ref{main} it follows that $m(\alpha_\lambda)=2$ and
 $U(x;\alpha_\lambda)$ is nondegenerate.
 Lemma \ref{existPS} implies that \eqref{LT1} has a positive solution $u$
 such that $u(x) \not \equiv U(x;\alpha_\lambda)$ and 
 \[
  \lambda(\alpha_\lambda) M e^{\|u\|_\infty} > \alpha_\lambda
 \]
 for some constant $M>0$, which shows
 $\lim_{\lambda\to+0} \|u\|_\infty=\infty$.
 Recalling that \eqref{LT1} has at most two positive even solutions, we
 conclude that $u$ is a positive non-even solution.
 This completes the proof of Theorem \ref{main}.

\section{Proof of the second main result}

In this section we prove Propositions \ref{exactforJL1}, \ref{even2} and
Theorem \ref{main2}.

Let $w$ be a unique solution of the initial value problem
\begin{equation*}
 \left\{
  \begin{array}{l}
   w'' + |x|^l (w+1)^p = 0, \quad x>0, \\[1ex]
    w(0)=w'(0)=0.
  \end{array}
 \right.
\end{equation*}
Since $w$ is concave when $w(x)>-1$, there exist
$x_1>0$ such that $-1<w(x)<0$, $w'(x)<0$, $w''(x)<0$ for $x \in (0,x_1)$, 
$w(x_1)=-1$, and $w'(x_1)<0$.
Hence, there exists the inverse function $\eta$ of $-w(x)$.
It follows that $\eta \in C^2(0,1]$, $\eta(t)>0$,
$\eta'(t)>0$ for $t\in (0,1]$, $\eta(0)=0$, and $\eta(1)=x_1$.
We set
\begin{equation}
 \lambda(\alpha)
  =(\alpha+1)^{1-p}
   \left[ \eta\left(\frac{\alpha}{\alpha+1}\right) \right]^{l+2}
  \label{Kormanlam2}
\end{equation}
and
\begin{equation}
 U(x;\alpha)
  =(\alpha+1)w\left( \eta\left(\frac{\alpha}{\alpha+1}\right)|x|\right)+\alpha.
 \label{Kormansol2}
\end{equation}
Then $(\lambda(\alpha),U(x;\alpha))$ is a Korman solution of \eqref{JL1},
that is, for each $\alpha>0$,
$U(x;\alpha)$ satisfies $\|U\|_\infty=\alpha$ and is a positive
even solution of \eqref{JL1} at $\lambda=\lambda(\alpha)$.
The form of $U(x;\alpha)$ is not exactly same as in the paper by Korman
\cite{Kor2}, but they are essentially same.

\begin{proof}[Proof of Proposition \ref{even2}]
 By the definition, it is easy to check that
 $\lambda(\alpha) \in C^2(0,\infty)$,
 $U(x;\alpha) \in C^2([-1,1] \times (0,\infty))$ and \eqref{limlambda}
 holds, because of $p>1$.
 Set
 \[
  \beta = \eta\left( \frac{\alpha}{\alpha+1} \right).
 \]
 Then $-w(\beta)=\alpha/(\alpha+1)$, that is, $\alpha = -w(\beta)/(w(\beta)+1)$.
 Hence we have
 \[
  \lambda(\alpha) = (w(\beta)+1)^{p-1} \beta^{l+2}.
 \]
 We note that
 \[
  \frac{d \beta}{d \alpha}
  = \frac{1}{(\alpha+1)^2} \eta'\left( \frac{\alpha}{\alpha+1} \right) >0,
  \quad \alpha>0.
 \]
 We observe that
 \[
  \lambda'(\alpha)
   = (w(\beta)+1)^{p-2} \beta^{l+1} [(p-1)\beta
   w'(\beta)+(l+2)(w(\beta)+1)] \frac{d \beta}{d \alpha}.
 \]
 We also note that 
 \begin{equation}
  W(x):= (p-1)xw'(x)+(l+2)(w(x)+1)
   \label{W}
 \end{equation}
 is strictly decreasing on $(0,x_1)$, since
 \[
  (xw'(x))' = w'(x) + xw''(x) <0, \quad x \in (0,x_1).
 \]
 Since $W(0)=l+2>0$ and $W(x_1)=(p-1)x_1w'(x_1)<0$, there exists
 $\beta_* \in (0,x_1)$ such that
  \begin{gather}
   W(x)>0, \quad 0 < x < \beta_*, 
    \label{W(x)>0} \\
   W(\beta_*)=0,
    \label{W(x)=0} \\
   W(x)<0, \quad \beta_*<x<x_1. 
  \label{W(x)<0}
  \end{gather}
 Set $\alpha_*=-w(\beta_*)/(w(\beta_*)+1)$.
 Then we conclude that $\lambda'(\alpha)>0$ for
 $0<\alpha<\alpha_*$, $\lambda(\alpha_*)=0$ and
 $\lambda'(\alpha)<0$ for $\alpha>\alpha_*$.
\end{proof}

To prove Proposition \ref{exactforJL1}, we need the following lemma.

\begin{lem}\label{uniqueJL1}
 For each $\alpha>0$, there exists a unique $(\lambda,u)$ such that
 $\lambda>0$ and $u$ is a positive even solution of \eqref{JL1} and
 $\|u\|_\infty=\alpha$.
 In particular, all positive even solutions of \eqref{JL1} can be
 written as \eqref{Kormanlam2}--\eqref{Kormansol2}.
\end{lem}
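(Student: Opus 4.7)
The plan is to reduce both existence and uniqueness to the uniquely solvable initial value problem defining $w$, via a simple two-parameter rescaling.

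Existence is a direct verification. Given $\alpha>0$, let $\beta:=\eta(\alpha/(\alpha+1))$ (well defined since $\alpha/(\alpha+1)\in(0,1)$) and set $U(x;\alpha)=(\alpha+1)w(\beta|x|)+\alpha$, $\lambda(\alpha)=\beta^{l+2}(\alpha+1)^{1-p}$. Differentiating twice and using $w''+|y|^l(w+1)^p=0$ yields
\[
U''(x;\alpha)=-(\alpha+1)\beta^{l+2}|x|^l(w(\beta|x|)+1)^p=-\lambda(\alpha)|x|^l(U(x;\alpha)+1)^p.
\]
The boundary condition $U(\pm1;\alpha)=0$ is exactly the defining relation $w(\beta)=-\alpha/(\alpha+1)$; positivity on $(-1,1)$ and $\|U\|_\infty=U(0;\alpha)=\alpha$ follow from the strict monotonicity of $w$ on $(0,x_1)$.

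For uniqueness, let $(\lambda,u)$ be any pair with $\lambda>0$ and $u$ a positive even solution of \eqref{JL1} satisfying $\|u\|_\infty=\alpha$. Evenness forces $u'(0)=0$, and concavity from $u''=-\lambda|x|^l(u+1)^p\le0$ gives $u(0)=\|u\|_\infty=\alpha$. I would then introduce
\[
\beta:=\bigl[\lambda(\alpha+1)^{p-1}\bigr]^{1/(l+2)},\qquad v(y):=\frac{u(y/\beta)+1}{\alpha+1}-1, \quad y\in[-\beta,\beta],
\]
and verify directly that $v(0)=v'(0)=0$ and $v''(y)+|y|^l(v(y)+1)^p=0$. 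Hence $v$ satisfies the same IVP as $w$, so by standard ODE uniqueness (applied on $[0,\beta]$ and extended to $[-\beta,0]$ by evenness) we have $v\equiv w$ on $[-\beta,\beta]$. The boundary condition $u(\pm1)=0$ then reads $w(\beta)=-\alpha/(\alpha+1)$, which uniquely determines $\beta=\eta(\alpha/(\alpha+1))$ and hence $\lambda=\beta^{l+2}(\alpha+1)^{1-p}$. Substituting back recovers $u(x)=(\alpha+1)w(\beta|x|)+\alpha$, which is precisely \eqref{Kormansol2}.

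The only technical point worth flagging is the legitimacy of the IVP reduction in the presence of the non-smooth weight $|x|^l$: because $u$ is even one has $u'(0)=0$, and near $w=0$ the nonlinearity $(w+1)^p$ is locally Lipschitz, so local uniqueness for the IVP defining $w$ holds and persists as long as $w>-1$. The range needed, $w(\beta)\in(-1,0)$, places $\beta\in(0,x_1)$, so no extension difficulty arises. I do not anticipate any substantive obstacle beyond this routine verification.
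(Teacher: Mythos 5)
Your proposal is correct and follows essentially the same route as the paper: both arguments hinge on the scaling relation $u(x)=(\alpha+1)w(\beta|x|)+\alpha$ with $\beta^{l+2}=\lambda(\alpha+1)^{p-1}$ together with uniqueness for the initial value problem at $x=0$ (the paper scales $w$ up to the unique solution of the IVP with $u(0)=\alpha$, $u'(0)=0$, whereas you scale a given solution $u$ down to $w$; these are the same computation read in opposite directions). The boundary condition then pins down $\beta=\eta(\alpha/(\alpha+1))$ and hence $\lambda=\lambda(\alpha)$ in both versions, so no gap.
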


\begin{proof}
 Let $\alpha>0$ be fixed.
 We consider the initial value problem
 \begin{equation}\label{u(x;l)}
  \left\{
   \begin{array}{l}
    u'' + \lambda |x|^l (u+1)^p = 0, \\[1ex]
     u(0)=\alpha, \quad u'(0)=0.
   \end{array}
 \right.
 \end{equation}
 We note that
 \[
 u(x;\lambda)
  :=(\alpha+1)w( \lambda^{\frac{1}{l+2}} (\alpha+1)^{\frac{p-1}{l+2}} |x| )
    + \alpha
 \]
 is a solution of \eqref{u(x;l)}.
 By the uniqueness of the initial value problem, we conclude that
 $u(x;\lambda)$ is a unique solution of \eqref{u(x;l)}.
 We note that $u(1;\lambda)=0$ if and only if $\lambda=\lambda(\alpha)$.
 It follows that $u(x;\lambda)$ is a positive even solution of
 \eqref{JL1} if and only if $\lambda=\lambda(\alpha)$, which means that
 there exists a unique $\lambda>0$ such that $u(x;\lambda)$ is a
 solution of \eqref{JL1}.
 When $\lambda=\lambda(\alpha)$, we find that $u(x;\lambda)=U(x;\alpha)$.
\end{proof}

\begin{proof}[Proof of Proposition \ref{exactforJL1}]
 Set $\lambda_*=\lambda(\alpha_*)$.
 Then Proposition \ref{exactforJL1} follows immediately from Proposition
 \ref{even2} and Lemma \ref{uniqueJL1}.
\end{proof}

Now we set
\begin{align*}
 \psi(x;\alpha)
   & := x U'(x;\alpha) + \frac{l+2}{p-1}[U(x;\alpha)+1] \\
   & \phantom{:} = \frac{\alpha+1}{p-1}
         W\left(\eta\left(\frac{\alpha}{\alpha+1}\right) |x| \right),
\end{align*}
where $W$ is the function defined by \eqref{W}.
Then it is easy to check that $\psi(x;\alpha)$ is a solution of the
linearized equation
\begin{equation*}
 \psi'' + \lambda(\alpha) |x|^l p (U(x;\alpha)+1)^{p-1} \psi = 0.
\end{equation*}
Recalling that $W(x)$ is strictly decreasing in $x \in (0,x_1)$, we
conclude that $\psi(x;\alpha)$ is also strictly decreasing in
$x \in (0,1)$ for each fixed $\alpha>0$, and hence
\[
 \min_{x \in [-1,1]} \psi(x;\alpha) = \psi(1;\alpha)
  = \frac{\alpha+1}{p-1}
         W\left(\eta\left(\frac{\alpha}{\alpha+1}\right) \right).
\]
Hereafter, let $\mu_k(\alpha)$ be the $k$-th eigenvalue of
\begin{equation}
 \left\{
 \begin{array}{l}
  \phi'' + \lambda(\alpha) |x|^l p(U(x;\alpha)+1)^{p-1} \phi + \mu \phi = 0,
   \quad x \in (-1,1), \\[1ex]
  \phi(-1) = \phi(1) = 0.
 \end{array}
	\right.
  \label{Morse2}
\end{equation}
By \eqref{W(x)>0}--\eqref{W(x)<0}, in the same way as in Section 2,
we have the following result.

\begin{lem}
 The following \textup{(i)--(iv) hold:}
 \begin{enumerate}
  \item $\mu_1(\alpha)>0$ for $0<\alpha<\alpha_*$\textup{;} 
  \item $\mu_1(\alpha_*)=0$\textup{;} 
  \item $\mu_1(\alpha)<0$ for $\alpha>\alpha_*$\textup{;}
  \item $\mu_3(\alpha)>0$ for $\alpha>0$\textup{.}
 \end{enumerate} 
\end{lem}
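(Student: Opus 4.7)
The plan is to reduce the proof to the Sturm-comparison argument already carried out in Section 2 for the exponential case, using the function $\psi(x;\alpha)=\frac{\alpha+1}{p-1}W(\eta(\alpha/(\alpha+1))|x|)$ introduced just above the statement as the substitute for the $\psi$ of Lemma \ref{psi2}. The first step is to establish the obvious analogue of Lemma \ref{psi2}: for $0<\alpha<\alpha_*$ we have $\eta(\alpha/(\alpha+1))\in(0,\beta_*)$, so by the strict monotonicity of $W$ on $(0,x_1)$ together with \eqref{W(x)>0}, $\psi(x;\alpha)>0$ for every $x\in[-1,1]$; for $\alpha=\alpha_*$ we have $\eta(\alpha_*/(\alpha_*+1))=\beta_*$, so $\psi(x;\alpha_*)>0$ on $(-1,1)$ and $\psi(\pm 1;\alpha_*)=0$; and for $\alpha>\alpha_*$, since $\psi(0;\alpha)=\frac{\alpha+1}{p-1}(l+2)>0$ while $\psi(\pm 1;\alpha)<0$, the strict monotonicity of $\psi(\,\cdot\,;\alpha)$ on $[0,1]$ noted in the text, together with evenness, forces $\psi$ to have exactly two zeros in $(-1,1)$.

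Granting this, parts (i)--(iii) follow from Sturm's comparison theorem essentially verbatim as in the proof of Lemma \ref{mu1}. For (i), if $\mu_1(\alpha)\le 0$ at some $\alpha\in(0,\alpha_*)$, comparison between the equation in \eqref{Morse2} (whose coefficient is $\lambda(\alpha)|x|^l p(U+1)^{p-1}+\mu_1\le\lambda(\alpha)|x|^l p(U+1)^{p-1}$) and the linearized equation satisfied by $\psi$ forces $\psi$ to have a zero in $[-1,1]$, contradicting its positivity. For (ii), $\psi(x;\alpha_*)$ is itself a positive solution of the linearized equation with Dirichlet data, hence a principal eigenfunction of \eqref{Morse2} with eigenvalue $0$, so $\mu_1(\alpha_*)=0$. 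For (iii), if $\mu_1(\alpha)\ge 0$ at some $\alpha>\alpha_*$, then since $\psi$ has two interior zeros, Sturm's theorem forces the eigenfunction $\phi_1$ to vanish somewhere in $(-1,1)$, contradicting its constant sign.

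For (iv), I would copy the argument of Lemma \ref{mu3}: assuming $\mu_3(\alpha)\le 0$, the third eigenfunction $\phi_3$ has exactly two zeros in $(-1,1)$, so Sturm comparison forces every solution of the linearized equation, in particular $\psi$, to have at least three zeros in $[-1,1]$. This contradicts the at-most-two-zeros conclusion from the analogue of Lemma \ref{psi2} just established.

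No real difficulty is anticipated, since the structural features of $\psi(x;\alpha)$ needed for the four Sturm arguments are already captured by the strict monotonicity of $W$ on $(0,x_1)$ together with \eqref{W(x)>0}--\eqref{W(x)<0}. The only point that deserves a moment's care is that, for $\alpha>\alpha_*$, the argument $\eta(\alpha/(\alpha+1))|x|$ stays inside $(0,x_1)$ for $x\in[0,1]$ so that the strict monotonicity of $W$ applies throughout $[0,1]$; this is immediate from the fact that $\eta$ maps $[0,1)$ into $[0,x_1)$ by its construction at the start of Section 6.
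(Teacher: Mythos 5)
Your proposal is correct and follows exactly the route the paper intends: the paper proves this lemma simply by remarking that it follows from \eqref{W(x)>0}--\eqref{W(x)<0} ``in the same way as in Section 2,'' and your write-up is precisely that argument spelled out — the analogue of Lemma \ref{psi2} for $\psi(x;\alpha)=\frac{\alpha+1}{p-1}W(\eta(\alpha/(\alpha+1))|x|)$ followed by the Sturm comparisons of Lemmas \ref{mu1} and \ref{mu3}. No gaps.
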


When $f(s)=(s+1)^p$, we have $g(s)=sf'(s)/f(s)=ps/(s+1)$ and then
\begin{equation*}
 \lim_{s\to\infty} \frac{l(g(s)-1)-4}{g(s)+l+3}= \frac{l(p-1)-4}{p+l+3}.
\end{equation*}
Therefore, if $(p-1)l>4$, then Lemma \ref{mu_2<0} shows that
$\mu_2(\alpha)<0$ for all sufficiently large $\alpha>0$.

In the same way as in Section 5, we can show (i)--(vii) of Theorem
\ref{main2}.
By using Lemma \ref{existPS} and the same argument as in Section 5,
we conclude that if $0<\lambda<\lambda(\alpha_3)$, then
\eqref{JL1} has a positive non-even solution $u$ such that
$\lim_{\lambda\to+0} \|u\|=\infty$.
This completes the proof of Theorem \ref{main2}.

\end{document}